\begin{document}

\title*{Van Trees inequality, group equivariance, and estimation of principal subspaces}
\author{Martin Wahl}
\institute{Martin Wahl \at Humboldt-Universit\"{a}t zu Berlin, Unter den Linden 6, 10099 Berlin \at \email{martin.wahl@math.hu-berlin.de}}
%
%
\maketitle


\abstract{We establish non-asymptotic lower bounds for the estimation of principal subspaces. As applications, we obtain new results for the excess risk of principal component analysis and the matrix denoising problem. 
\keywords{Van Trees inequality, Cramér-Rao inequality, group equivariance, orthogonal group, Haar measure, principal subspace, doubly substochastic matrix}}

\section{Introduction}
Many learning algorithms and statistical procedures rely on the spectral decomposition of some empirical matrix or operator. Leading examples are principal component analysis (PCA) and its extensions to kernel PCA or manifold learning. In modern statistics and data science, such methods are typically studied in a high-dimensional or infinite-dimensional setting. Moreover, a major focus is on non-asymptotic results, that is, one seeks results that depend optimally on the underlying parameters, such as sample size and dimension (see, e.g., \cite{MR3611488,MR3980312} for two recent developments).

In this paper, we are concerned with non-asymptotic lower bounds for the estimation of principal subspaces, that is the eigenspace of the, say $d$, leading eigenvalues. As stated in \cite{MR3161458}, it is highly nontrivial to obtain such lower bounds which depend optimally on all underlying parameters, in particular the eigenvalues and $d$. In fact, in contrast to asymptotic settings, where one can apply the local asymptotic minimax theorem \cite{MR0400513}, it seems unavoidable to use some more sophisticated facts on the underlying parameter space of all orthonormal bases in order to obtain non-asymptotic lower bounds. A state-of-the-art result, obtained in \cite{MR3161458} and \cite{MR3161452}, provides a non-asymptotic lower bound for the spiked covariance model with two groups of eigenvalues. To state their result, consider the statistical model defined by   
\begin{equation}\label{eq_stat_experiment}
(\mathbb{P}_U)_{U\in O(p)},\qquad\mathbb{P}_U=\mathcal{N}(0, U\Lambda U^T)^{\otimes n},
\end{equation} 
where $O(p)$ denotes the orthogonal group, $\Lambda=\operatorname{diag}(\lambda_1,\dots,\lambda_p)$ is a diagonal matrix with $\lambda_1\geq \dots\geq \lambda_p> 0$, $\mathcal{N}(0, U\Lambda U^T)$ denotes a Gaussian distribution with expectation zero and covariance matrix $U\Lambda U^T$ and $n\geq 1$ is a natural number. This statistical model corresponds to observing $n$ independent $\mathcal{N}(0, U\Lambda U^T)$-distributed random variables $X_1,\dots,X_n$, and we will write $\mathbb{E}_U$ to denote expectation with respect to $X_1,\dots,X_n$ having law $\mathbb{P}_U$. Moreover, in this model, the $d$-th principal subspace (resp.~its corresponding orthogonal projection) is given by $P_{\leq d}(U)=\sum_{i\leq d}u_iu_i^T$, where $u_1,\dots ,u_p$ are the columns of $U\in O(p)$. 

\begin{theorem}[\cite{MR3161458}] 
Consider the statistical model \eqref{eq_stat_experiment} with $\lambda_1=\dots=\lambda_d>\lambda_{d+1}=\dots=\lambda_p>0$. Then there is an absolute constant $c>0$ such that
\[
\inf_{\hat P}\sup_{U\in O(p)}\mathbb{E}_U\|\hat P-P_{\leq d}(U)\|_{2}^2
\geq c\cdot \min\Big(\frac{d(p-d)}{n}\frac{\lambda_d\lambda_{d+1}}{(\lambda_d-\lambda_{d+1})^2},d,p-d\Big),
\]
where the infimum is taken over all estimators $\hat P=\hat P(X_1,\dots, X_n)$ with values in the class of all orthogonal projections on $\mathbb{R}^p$ of rank $d$ and $\|\cdot\|_{2}$ denotes the Hilbert-Schmidt (or Frobenius) norm. 
\end{theorem}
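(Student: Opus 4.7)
The plan is to combine the van Trees (Bayesian Cramér-Rao) inequality with the group equivariance of the model under the orthogonal group. The loss $\|\hat P - P_{\leq d}(U)\|_2^2$ is invariant under the right action $U\mapsto UV$ with $V\in O(d)\times O(p-d)$, so the effective parameter lives on the Grassmannian $\mathrm{Gr}(d,p) = O(p)/(O(d)\times O(p-d))$ of dimension $d(p-d)$. Equivariance lets me fix an arbitrary base point $U_0\in O(p)$ and place a smooth prior on a local chart around it; the minimax risk is then bounded below by the resulting Bayes risk.

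Concretely, I would parameterize a neighbourhood of $U_0$ via $U(t) = U_0 \exp\bigl(\sum_{k\leq d<l} t_{kl} A_{kl}\bigr)$ with $A_{kl} = e_k e_l^T - e_l e_k^T$, giving the $d(p-d)$ tangent generators transverse to the stabilizer of $I_d$. A direct Gaussian calculation yields $\partial_{kl}\Sigma|_{t=0} = (\lambda_l - \lambda_k)U_0(e_k e_l^T + e_l e_k^T)U_0^T$, and the two-level structure $\lambda_1=\dots=\lambda_d,\ \lambda_{d+1}=\dots=\lambda_p$ then forces the Fisher information matrix at $t=0$ to be a scalar multiple of the identity,
\[
  I_n(0)_{(k,l),(k',l')} \;=\; \frac{n(\lambda_d-\lambda_{d+1})^2}{\lambda_d\lambda_{d+1}}\,\delta_{(k,l),(k',l')}.
\]
Simultaneously, the Jacobian of $\psi(t):=P_{\leq d}(U(t))$ has mutually orthogonal Hilbert-Schmidt columns $\partial_{kl}\psi|_{t=0} = -U_0(e_k e_l^T + e_l e_k^T)U_0^T$ of squared norm $2$, so that $D\psi^T D\psi = 2\, I_{d(p-d)}$.

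Plugging these into the multi-parameter van Trees inequality
\[
  \mathbb{E}_\pi\mathbb{E}_{U(t)}\|\hat P - \psi(t)\|_2^2 \;\geq\; \operatorname{tr}\!\bigl(D\psi\,[\mathbb{E}_\pi I_n(t) + J(\pi)]^{-1}D\psi^T\bigr)
\]
with a compactly supported product prior $\pi$ of constant-order Fisher information $J(\pi)$ then gives $\gtrsim d(p-d)\lambda_d\lambda_{d+1}/(n(\lambda_d-\lambda_{d+1})^2 + c\,\lambda_d\lambda_{d+1})$, matching the first term of the minimum in the informative regime. In the low signal-to-noise regime, I would complement this with a direct two-point (Le Cam) lower bound using two rank-$d$ projections with nearly orthogonal ranges whose induced Gaussian laws are statistically close, together with the trivial ceiling $\|\hat P - P_{\leq d}(U)\|_2^2 \leq 2\min(d,p-d)$ for rank-$d$ projections; this supplies the matching $\min(d,p-d)$ term, and taking the minimum over the two regimes reproduces the stated bound.

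The principal technical difficulty will be carrying out the multi-parameter van Trees step cleanly enough to retain the full $d(p-d)$ count. Here the equal-eigenvalue structure is the crucial simplification: it makes the Fisher matrix a scalar multiple of the identity and $D\psi$ orthogonal, so that $\operatorname{tr}(D\psi\, I_n^{-1}\, D\psi^T) = 2d(p-d)/I_n$ is tight by inspection. For the paper's more general framework (hinted at by the keyword ``doubly substochastic matrix''), the Fisher matrix is no longer scalar and controlling the trace passes through a Birkhoff-type convex combination argument; the present theorem fortunately avoids this. Constructing the prior so that $J(\pi)$ remains of constant order while staying compactly supported in the exponential chart, and verifying that the Fisher information varies mildly over the prior's support, are further standard but nontrivial regularity points.
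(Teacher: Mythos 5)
Your route differs from both the original proof cited in the paper (Fano's method with metric entropy of the Grassmannian, \cite{MR1742500,MR1665590}) and from the paper's own device (group equivariance reducing to a pointwise Cram\'er--Rao inequality for equivariant estimators, Proposition~\ref{main:lower:bound} and Theorem~2). Your informative-regime calculation is correct: with the two-level spectrum the Fisher matrix in the exponential chart is indeed $n(\lambda_d-\lambda_{d+1})^2/(\lambda_d\lambda_{d+1})$ times the identity, and the columns of $D\psi$ are orthogonal with squared Hilbert--Schmidt norm $2$, so the trace term gives $d(p-d)\lambda_d\lambda_{d+1}/(n(\lambda_d-\lambda_{d+1})^2)$.

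The gap is in the low-SNR regime, and it is not a mere regularity footnote. You assert the prior Fisher information $J(\pi)$ can be kept of constant order on a compactly supported chart; this is impossible, because then the van Trees bound would be of order $d(p-d)$ for small $n(\lambda_d-\lambda_{d+1})^2/(\lambda_d\lambda_{d+1})$, contradicting the deterministic ceiling $\|\hat P-P_{\leq d}(U)\|_2^2\le 2\min(d,p-d)$. The obstruction is that keeping $\exp(\sum_{k\le d<l}t_{kl}A_{kl})$ inside an injective chart of the Grassmannian forces $\bigl\|\sum t_{kl}A_{kl}\bigr\|_{op}\lesssim 1$, which caps the per-coordinate support at roughly $1/\sqrt{\max(d,p-d)}$ (deterministically even $1/\sqrt{d(p-d)}$), so $J(\pi)$ grows by the corresponding factor; you would then need to justify that $I_n(t)$ and $D\psi(t)$ stay comparable over that whole support. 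The Le Cam two-point complement does not fill the hole either: for two rank-$d$ projections with $\|P_1-P_2\|_2^2\asymp\min(d,p-d)$, the $n$-sample KL divergence scales as $n\,\min(d,p-d)\,(\lambda_d-\lambda_{d+1})^2/(\lambda_d\lambda_{d+1})$, so the laws are statistically close only in the far-subcritical regime $n(\lambda_d-\lambda_{d+1})^2/(\lambda_d\lambda_{d+1})\lesssim 1/\min(d,p-d)$, which misses the whole range $1\lesssim n(\lambda_d-\lambda_{d+1})^2/(\lambda_d\lambda_{d+1})\lesssim\max(d,p-d)$ where the $\min(d,p-d)$ term is active. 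This is exactly the regime the equivariance argument (or the Grassmannian packing in \cite{MR3161458}) is designed to cover.
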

The proof is based on applying lower bounds under metric entropy conditions \cite{MR1742500} combined with the metric entropy of the Grassmann manifold \cite{MR1665590}. This (Grassmann) approach has been applied to many other principal subspaces estimation problems and spiked structures (see, e.g., \cite{CLM20,MR3766946,MR3396982,MR4036040}). In principle, it can also be applied to settings with decaying eigenvalues by considering spiked submodels. Yet, since this leads to lower bounds of a specific multiplicative form, it seems difficult to recover the optimal weighted eigenvalue expressions appearing in the asymptotic limit \cite{MR650934} and in the non-asymptotic upper bounds from \cite{MR4052188,MR4102689}.

To overcome this difficulty, \cite{W20} proposed a new approach based on a van Trees-type inequality with reference measure being the Haar measure on the special orthogonal group $SO(p)$. The key ingredient is to explore the group equivariance of the statistical model \eqref{eq_stat_experiment}, allowing to take advantage of the Fisher geometry of the model more efficiently. For instance, a main consequence of the developed theory is the following non-asymptotic analogue of the local asymptotic minimax theorem.

\begin{theorem}[\cite{W20}] 
Consider the statistical model \eqref{eq_stat_experiment} with $\lambda_1\geq \dots\geq \lambda_p> 0$. Then there are absolute constants $c,C>0$ such that, for every $h\geq C$, we have 
\begin{align*}
\inf_{\hat P}\int_{SO(p)}\mathbb{E}_{U}\|\hat P-P_{\leq d}(U)\|_{2}^2\,\pi_h(U)dU\geq c\cdot\sum_{i\leq d}\sum_{j>d}\min\Big(\frac{1}{n}\frac{\lambda_i\lambda_j}{(\lambda_i-\lambda_j)^2},\frac{1}{h^2p}\Big),
\end{align*}
where the infimum is taken over all $\mathbb{R}^{p\times p}$-valued estimators $\hat P=\hat P(X_1,\dots, X_n)$, $dU$ denotes the Haar measure on $SO(p)$, and the prior density $\pi_h$ is given by
$$
\pi_h(U)=\frac{\exp(hp\operatorname{tr}U)}{\int_{SO(p)}\exp(hp\operatorname{tr}U)\,dU}
$$
with $\operatorname{tr} U$ denoting the trace of $U$.
\end{theorem}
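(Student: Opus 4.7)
The approach is a van Trees (Bayesian Cram\'er--Rao) argument on $SO(p)$, applied separately along each skew-symmetric generator $A_{ij}=e_ie_j^T-e_je_i^T\in\mathfrak{so}(p)$ for $i\le d<j$. These $d(p-d)$ generators span precisely the infinitesimal rotations under which $P_{\le d}(U)$ actually moves (any rotation with both indices $\le d$ or both $>d$ leaves it invariant), and they form an orthonormal frame for the tangent space to the Grassmannian at $P_{\le d}(U)$. The plan is to derive a van Trees lower bound on a scalar problem per pair $(i,j)$ and then sum the $d(p-d)$ contributions.

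First I would linearize the loss in a fixed orthonormal basis. With $E_{ij}=e_ie_j^T+e_je_i^T$, the family $\{E_{ij}/\sqrt 2\}_{i\le d<j}$ is orthonormal in Frobenius norm, so Bessel's inequality gives, for any symmetric estimator $\hat P$,
\begin{equation*}
\|\hat P-P_{\le d}(U)\|_2^2 \;\ge\; \sum_{i\le d<j}\bigl(T_{ij}(X)-\psi_{ij}(U)\bigr)^2,
\end{equation*}
where $T_{ij}(X)=\langle\hat P(X),E_{ij}\rangle/\sqrt 2$ depends only on the data and $\psi_{ij}(U)=\langle P_{\le d}(U),E_{ij}\rangle/\sqrt 2$ depends only on the parameter. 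This reduces the multivariate problem to $d(p-d)$ scalar estimation problems that may be bounded one at a time.

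Next I would compute the two ingredients needed along the one-parameter subgroup $t\mapsto Ue^{tA_{ij}}$. Differentiating $\Sigma(t)=Ue^{tA_{ij}}\Lambda e^{-tA_{ij}}U^T$ and substituting into the Gaussian Fisher formula $I=\tfrac n2\operatorname{tr}((\Sigma^{-1}\dot\Sigma)^2)$ yields the $U$-independent model Fisher information $I_{\mathrm{mod}}(A_{ij})=n(\lambda_i-\lambda_j)^2/(\lambda_i\lambda_j)$; the cross-terms between distinct pairs vanish (an immediate consequence of the vanishing traces $\operatorname{tr}(\Lambda^{-1}E_{ij}\Lambda^{-1}E_{kl})=0$ for $\{i,j\}\ne\{k,l\}$), so the Fisher matrix is diagonal in the $\{A_{ij}\}$ basis and a per-direction scalar van Trees is legitimate. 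On the prior side, the left-invariant derivative reads $(L_{A_{ij}}\log\pi_h)(U)=hp\operatorname{tr}(UA_{ij})=hp(U_{ji}-U_{ij})$, and for $h\ge C$ the concentration of $\pi_h$ at the identity controls both the prior-side contribution (through $\mathbb{E}_{\pi_h}[(U_{ji}-U_{ij})^2]$) and the expected directional derivative
\begin{equation*}
\int L_{A_{ij}}\psi_{ij}(U)\,\pi_h(U)\,dU \;=\; -\sqrt 2\,\mathbb{E}_{\pi_h}[U_{ii}U_{jj}+U_{ij}U_{ji}],
\end{equation*}
keeping the latter uniformly bounded away from zero.

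Finally, van Trees applied per pair yields a scalar lower bound of the form $\mathrm{const}/\bigl(nI_{\mathrm{mod}}(A_{ij})+J_{\mathrm{pri}}(A_{ij})\bigr)$, and the elementary inequality $1/(a+b)\ge\tfrac12\min(1/a,1/b)$ converts it into the minimum appearing in the statement; summing over the $d(p-d)$ pairs completes the proof. The main obstacle is the clean execution of van Trees on the non-abelian group $SO(p)$: one has to integrate by parts against Haar measure along each left-invariant vector field $A_{ij}$ (using the bi-invariance of Haar on the unimodular group $SO(p)$), verify the Fisher-matrix diagonality in the $A_{ij}$ basis, and extract the correct dependence on $h$ and $p$ in the prior term from $\pi_h$-expectations of quadratic functions of the matrix entries. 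This last calculation is where the density $\pi_h(U)\propto\exp(hp\operatorname{tr}U)$ interacts with the generators $A_{ij}$, and where the hypothesis $h\ge C$ is consumed.
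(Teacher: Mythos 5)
Your outline matches the route taken in [W20]: a van Trees inequality on $SO(p)$ with the concentrated prior $\pi_h$, applied along the generators $A_{ij}=e_ie_j^T-e_je_i^T$ for $i\le d<j$ after projecting the loss via Bessel. The Fisher computation and the left-invariant derivative of $\log\pi_h$ are both correct. Note, however, that the present paper does not prove this theorem; it cites it and develops a \emph{different} device in Proposition~\ref{main:lower:bound}, namely an equivariance reduction to a pointwise Cram\'er--Rao inequality for equivariant estimators under the \emph{uniform} prior. That uniform-prior route deliberately avoids the prior-information bookkeeping you are setting up, and the paper's Remark~3 explains why the two approaches are complementary.

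Two points in the execution deserve attention. First, the claim that Fisher-matrix diagonality in the $\{A_{ij}\}$ basis is what ``legitimizes'' per-direction scalar van Trees is not needed: for each $(i,j)$ one runs a one-dimensional van Trees with score the derivative along the single left-invariant field $A_{ij}$, justified by integration by parts against Haar measure on the compact unimodular group $SO(p)$; the resulting denominator contains only the diagonal entry $\mathcal{I}(A_{ij},A_{ij})$ regardless of cross-terms, and summing the per-pair bounds is always valid. Diagonality is a tightness remark, not a prerequisite. Second, and more substantively, the scaling you anticipate for the prior term will not come out as $h^2p$. The prior score in direction $A_{ij}$ is $hp(U_{ji}-U_{ij})$, so the prior Fisher contribution is $h^2p^2\,\mathbb{E}_{\pi_h}[(U_{ji}-U_{ij})^2]$. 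Integrating $f(U)=U_{ji}-U_{ij}$ by parts along $A_{ij}$ gives the exact identity $hp\,\mathbb{E}_{\pi_h}[(U_{ji}-U_{ij})^2]=\mathbb{E}_{\pi_h}[U_{ii}+U_{jj}]\le 2$, so the prior Fisher is $\le 2hp$, not $\asymp h^2p$. Combined with the lower bound on the numerator $\mathbb{E}_{\pi_h}[U_{ii}U_{jj}+U_{ij}U_{ji}]\ge c'$ (which is the genuine technical step that consumes $h\ge C$ and which you have not carried out), your calculation actually yields $\min\bigl(n^{-1}\lambda_i\lambda_j/(\lambda_i-\lambda_j)^2,\,1/(hp)\bigr)$ per pair, which \emph{implies} the stated bound since $1/(hp)\ge 1/(h^2p)$ for $h\ge 1$ but is not the same expression. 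You should carry out the $\pi_h$-concentration estimate for the numerator explicitly and be precise about which power of $h$ you in fact obtain.
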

Clearly, for the (from a non-asymptotic point of view) optimal choice $h=C$, Theorem~2 implies Theorem 1, as can be seen from inserting $2d(p-d)/p\geq \min(d,p-d)$. Moreover, as shown in \cite[Section 1.3]{W20}, Theorem 2 can be used to derive tight non-asymptotic minimax lower bounds under standard eigenvalue conditions from functional PCA or kernel PCA, including exponentially and polynomially decaying eigenvalues. 
%

%

The goal of this paper is to extend the theory of \cite{W20} in several directions. First, we provide a lower bound for the excess risk of PCA. This loss function can be written as a weighted squared loss and a variation of the approach in \cite{W20} allows to deal with it. To achieve this, we establish a slightly complementary (and less general) van Trees-type inequality tailored for principal subspace estimation problems and dealing solely with the uniform prior. Interestingly, such uniform prior densities lead to trivial results in van Trees inequality from \cite{W20} (as well as in previous classical van Trees approaches \cite{MR1354456,MR2724359}). Finally, we provide lower bounds that are characterized by doubly substochastic matrices whose entries are bounded by the inverses of the different Fisher information directions, confirming previous non-asymptotic upper bounds that hold for the principal subspaces of the empirical covariance operator \cite[Section 2.3]{MR4102689}. 

\section{A van Trees inequality for the estimation of principal subspaces}\label{sec:van:trees:inequality}
In this section, we state a general van Trees-type inequality tailored for principal subspace estimation problems. Applications to more concrete settings are presented in Section \ref{sec:applications}.
Let $(\mathcal{X},\mathcal{F},(\mathbb{P}_U)_{U\in O(p)})$ be a statistical model with parameter space being the orthogonal group $O(p)$. Let $(A,\langle \cdot,\cdot\rangle)$ be a real inner product space of dimension $m\in \mathbb{N}$ and let $\psi:O(p)\rightarrow A$ be a derived parameter. We suppose that $O(p)$ acts (from the left, measurable) on $\mathcal{X}$ and $A$ such that 

\begin{description}[Type 1]
\item[(A1)] {$(\mathbb{P}_U)_{U\in O(p)}$ is $O(p)$-equivariant (i.e., $\mathbb{P}_{VU}(VE)=\mathbb{P}_{U}(E)$ for all $U,V\in O(p)$ and all $E\in\mathcal{F}$) and $U\mapsto \mathbb{P}_U(E)$ is measurable for all $E\in\mathcal{F}$.}
\item[(A2)] {$\psi$ is $O(p)$-equivariant (i.e., $\psi(VU)=V\psi(U)$ for all $U,V\in O(p)$).}
\item[(A3)] {$\langle Ua,Ub\rangle=\langle a,b\rangle$ for all $a,b\in A$ and all $U\in O(p)$.}
\end{description}
Condition (A1) says that for a random variable $X$ with distribution $\mathbb{P}_U$, the random variable $VX$ has distribution $\mathbb{P}_{VU}$. For more background on statistical models under group action, the reader is deferred to \cite{MR1089423,MR2431769} and also to \cite[Section 2.3]{W20}. 

Next, we specify the allowed loss functions. Let $(v_1,\dots,v_m):O(p)\rightarrow A^m$ be such that for all $j=1,\dots,m$,

\begin{description}[Type 1]
\item[(A4)] {$v_1(U),\dots,v_m(U)$ is an orthonormal basis of $A$ for all $U\in O(p)$.}
\item[(A5)] {$v_j$ are $O(p)$-equivariant (i.e., $v_j(VU)=Vv_j(U)$ for all $U,V\in O(p)$).}
\end{description}
For $w\in \mathbb{R}_{>0}^m$ we now define the loss function
\begin{align*}
l_w:O(p)\times A\rightarrow \mathbb{R}_{\geq 0},\qquad l_w(U,a)=\sum_{k=1}^mw_k\langle v_k(U),a-\psi(U)\rangle^2.
\end{align*}
If $w_1=\dots=w_m=1$, then $l_w$ does not depend on $v_1,\dots,v_m$ and is equal to the squared norm in $A$
\begin{align}\label{eq:choice:w1:HS:norm}
l_{(1,\dots,1)}(U,a)=\|a-\psi(U)\|^2=\langle a-\psi(U),a-\psi(U)\rangle.
\end{align}
For general $w$, the loss function $l_w$ is itself invariant in the sense that 
\begin{align}\label{eq:invariant:loss:function}
 l_w(VU,Va)=l_w(U,a)\quad \text{for all } U,V\in O(p),a\in A,
\end{align}
as can be seen from (A2), (A3) and (A5). For an estimator $\hat\psi(X)$ based on an observation $X$ from the model, the $l_w$-risk is defined as $\mathbb{E}_Ul_w(U,\hat{\psi}(X))$, where $\mathbb{E}_U$ denotes expectation when $X$ has distribution $\mathbb{P}_U$. 

In order to formulate our abstract main result, we also need some differentiability conditions on $\psi$ and the $v_j$. We assume that $\psi$ and $v_j$ are differentiable at the identity matrix $I_p$ in the sense that for all $\xi\in \mathfrak{so}(p)$, all $a\in A$ and all $j=1,\dots,m$, we have
\begin{description}[Type 1]
\item[(A6)] {$\lim_{t\rightarrow 0}\limits\big\langle \frac{\psi(\exp(t\xi))-\psi(I_p)}{t},a\big\rangle =\langle d\psi(I_p)\xi,a\rangle$.}
\item[(A7)] {$\lim_{t\rightarrow 0}\limits\big\langle \frac{v_j(\exp(t\xi))-v_j(I_p)}{t},a\big\rangle =\langle dv_j(I_p)\xi,a\rangle$.}
\end{description}
Here, $d\psi(I_p)\xi$ and $dv_j(I_p)\xi$ denote the directional derivatives at $I_p$ defined on the Lie algebra $\mathfrak{so}(p)$ on $SO(p)$ (i.e., the tangent space of $O(p)$ at $I_p$). Since $A$ is finite-dimensional, conditions (A6) and (A7) can also formulated in the norm-sense $\lim_{t\rightarrow 0}\|t^{-1}(\psi(\exp(t\xi))-\psi(I_p))-d\psi(I_p)\xi\|=0$ for all $\xi\in \mathfrak{so}(p)$. For some background on the special orthogonal group $SO(p)$ and its Lie algebra $\mathfrak{so}(p)$, see, e.g., \cite[Section 2.1]{W20}.

Recall that the $\chi^2$-divergence between two probability measures $\mathbb{P}\ll \mathbb{Q}$ is defined as $\chi^2(\mathbb{P},\mathbb{Q})=\int(\frac{d\mathbb{P}}{d\mathbb{Q}})^2\,d\mathbb{Q}-1$.

\begin{proposition}\label{main:lower:bound} Assume (A1)--(A7). Let $\xi_1,\dots, \xi_m\in \mathfrak{so}(p)$ be such that $\mathbb{P}_{\exp(t\xi_j)}\ll \mathbb{P}_{I_p}$ for all $j=1,\dots,m$ and all $t$ small enough. Suppose that there are $a_1,\dots,a_m\in(0,\infty]$ such that for all $j=1,\dots,m$,
\begin{align}\label{eq:chi2:Fisher}
\lim_{t\rightarrow 0}\frac{1}{t^2}\chi^2(\mathbb{P}_{\exp(t\xi_j)},\mathbb{P}_{I_p})=a_j^{-1},
\end{align}
Then, for all estimators $\hat\psi=\hat\psi(X)$ with values in $A$, we have 
\begin{align*}
\int_{O(p)}\mathbb{E}_Ul_w(U,\hat{\psi}(X))\,dU\geq \frac{\Big(\sum_{j=1}^m\limits\langle v_j(I_p),d\psi(I_p)\xi_j\rangle\Big)^2}{\sum_{j=1}^m\limits w_j^{-1}a_j^{-1}+\sum_{k=1}^m\limits w_k^{-1}\Big(\sum_{j=1}^m\limits\langle v_k(I_p),dv_j(I_p)\xi_j\rangle\Big)^2}.
\end{align*} 
\end{proposition}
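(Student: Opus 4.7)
The overall strategy combines a Rao--Blackwell reduction with a Cram\'er--Rao identity obtained by differentiating a constant function at the identity. First, using convexity of $l_w$ in its second argument (a sum of squares) together with the invariance \eqref{eq:invariant:loss:function}, Jensen's inequality shows that the Haar-averaged estimator $\hat\psi_{\mathrm{eq}}(X):=\int_{O(p)}V^{-1}\hat\psi(VX)\,dV$ is $O(p)$-equivariant and has integrated risk no larger than $\hat\psi$. We may therefore assume $\hat\psi$ is equivariant, in which case (A2)--(A4) together with orthogonality force $\mathbb{E}_V[l_w(V,\hat\psi)]=\mathbb{E}_{I_p}[l_w(I_p,\hat\psi)]$ and $V\mapsto h_k(V):=\mathbb{E}_V[\langle v_k(V),\hat\psi-\psi(V)\rangle]$ to be constant in $V$. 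It therefore suffices to bound $\mathbb{E}_{I_p}\sum_k w_k Z_k^2$, where $Z_k:=\langle v_k(I_p),\hat\psi-\psi(I_p)\rangle$.

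Next, set $s_j:=\tfrac{d}{dt}\big|_0 d\mathbb{P}_{\exp(t\xi_j)}/d\mathbb{P}_{I_p}$, so $\mathbb{E}_{I_p}s_j=0$ and $\mathbb{E}_{I_p}s_j^2=a_j^{-1}$ by \eqref{eq:chi2:Fisher}. Differentiating $h_k\equiv\mathrm{const}$ in direction $\xi_j$ at $I_p$, using (A6), (A7) and interchanging $\partial_t$ with the expectation via the $\chi^2$-regularity to get $\partial_{\xi_j}\mathbb{E}_V\hat\psi\big|_{I_p}=\mathbb{E}_{I_p}[\hat\psi\,s_j]$, yields the key Cram\'er--Rao identity
\[
\mathbb{E}_{I_p}[Z_k s_j]=\langle v_k(I_p),d\psi(I_p)\xi_j\rangle-\langle dv_k(I_p)\xi_j,B\rangle,\qquad B:=\mathbb{E}_{I_p}[\hat\psi-\psi(I_p)].
\]

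Setting $k=j$ and summing over $j$, then expanding the bias $B=\sum_k\mathbb{E}_{I_p}[Z_k]\,v_k(I_p)$ in the basis $\{v_k(I_p)\}$ so that $\sum_j\langle dv_j(I_p)\xi_j,B\rangle=\sum_k\langle v_k(I_p),D\rangle\mathbb{E}_{I_p}Z_k$ with $D:=\sum_j dv_j(I_p)\xi_j$, the identity reorganizes (after relabeling $k\leftrightarrow j$) as
\[
N:=\sum_{j=1}^m\langle v_j(I_p),d\psi(I_p)\xi_j\rangle=\mathbb{E}_{I_p}\Big[\sum_{j=1}^m Z_j\bigl(s_j+\langle v_j(I_p),D\rangle\bigr)\Big].
\]
Applying Cauchy--Schwarz in $\mathbb{R}^m$ with weights $w_j$ pointwise in $X$ and then Cauchy--Schwarz in $L^2(\mathbb{P}_{I_p})$ gives
\[
N^2\le\mathbb{E}_{I_p}[l_w(I_p,\hat\psi)]\cdot\sum_{j=1}^m w_j^{-1}\mathbb{E}_{I_p}\bigl(s_j+\langle v_j(I_p),D\rangle\bigr)^2,
\]
and expanding the square via $\mathbb{E}_{I_p}s_j=0$ (so the cross-term vanishes and $\mathbb{E}_{I_p}(s_j+c)^2=a_j^{-1}+c^2$) produces exactly the denominator stated in the proposition.

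The main obstacle is the algebraic step of diagonalization: the bias $B$ is an estimator-dependent quantity with no a priori reason to drop out of the final bound. It is precisely the choice $k=j$ in the Cram\'er--Rao identity, combined with re-expanding $\langle dv_j(I_p)\xi_j,B\rangle$ in the basis $\{v_k(I_p)\}$, that absorbs $B$ into an ``augmented score'' $s_j+\langle v_j(I_p),D\rangle$; Cauchy--Schwarz then delivers the stated denominator with the correction term $\sum_k w_k^{-1}\bigl(\sum_j\langle v_k(I_p),dv_j(I_p)\xi_j\rangle\bigr)^2$ playing the role of the prior-information correction familiar from classical van Trees. The remaining technical points (measurability of $\hat\psi_{\mathrm{eq}}$, the interchange of $\partial_t$ and expectation) are standard under the hypotheses (A1)--(A7) and \eqref{eq:chi2:Fisher}.
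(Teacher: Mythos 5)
Your outline reproduces the right structure: a Rao--Blackwell/Haar-averaging reduction to equivariant estimators, followed by a Cram\'er--Rao identity at $I_p$ and two applications of Cauchy--Schwarz, with the bias term absorbed into an augmented ``score.'' This is the same skeleton as the paper's Lemma~\ref{main:lemma}. The algebra also checks out: differentiating the constancy of $h_k(V):=\mathbb{E}_V\langle v_k(V),\hat\psi-\psi(V)\rangle$, taking $k=j$, summing, and re-expanding $B$ in the basis $\{v_k(I_p)\}$ does produce the claimed denominator.

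However, there is a genuine technical gap in how you implement the Cram\'er--Rao step. You posit a score function $s_j := \tfrac{d}{dt}\big|_0\,d\mathbb{P}_{\exp(t\xi_j)}/d\mathbb{P}_{I_p}$ and assert $\mathbb{E}_{I_p}s_j = 0$, $\mathbb{E}_{I_p}s_j^2 = a_j^{-1}$, and the interchange $\partial_t\mathbb{E}_{\exp(t\xi_j)}[\cdot]\big|_0 = \mathbb{E}_{I_p}[\cdot\,s_j]$, and you credit this to ``$\chi^2$-regularity.'' But the stated hypothesis~\eqref{eq:chi2:Fisher} only says that $t^{-2}\chi^2(\mathbb{P}_{\exp(t\xi_j)},\mathbb{P}_{I_p})\to a_j^{-1}$, i.e.\ that $\|(\ell_t-1)/t\|_{L^2(\mathbb{P}_{I_p})}^2$ has a limit, where $\ell_t:=d\mathbb{P}_{\exp(t\xi_j)}/d\mathbb{P}_{I_p}$. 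That does not imply that $(\ell_t-1)/t$ converges in $L^2$ to a well-defined score with squared norm exactly $a_j^{-1}$ (that would be differentiability in quadratic mean, a strictly stronger condition), nor does it justify passing $\partial_t$ through the expectation. Your argument therefore quietly uses DQM, which is not assumed. The paper sidesteps this by never introducing a score: it performs the entire Cram\'er--Rao computation at finite $t$, working with the difference expression $\sum_j\mathbb{E}_{I_p}\langle v_j(I_p),\psi(U_j^T)-\psi(I_p)\rangle$, using the change-of-measure identity to write it in terms of $\ell_t-1$, applying Parseval and Cauchy--Schwarz to get an inequality for each $t$, and only then taking $t\to0$ under exactly~\eqref{eq:chi2:Fisher}. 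If you adopt that finite-$t$ discretization of your identity and limit at the end, your argument becomes a complete proof; as written, the differentiation step needs an extra regularity hypothesis that the proposition does not provide.
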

\begin{remark}
In applications, the $a_j^{-1}\in [0,\infty)$ will be the different Fisher information directions. We use the inverse notation because it will be more suitable to solve the final optimization problem in Section \ref{sec:proof:appli:optimization}.
\end{remark}
\begin{remark}
Let us briefly compare Proposition \ref{main:lower:bound} to \cite[Proposition 1 and Theorem~3]{W20}, where a more general van Trees inequality is presented. In fact, the bound \cite[Theorem 3]{W20} has a more classical form and involves a general prior, an average over the prior in the numerator and Fisher informations of the prior in the denominator. Yet, while these Fisher informations are zero for the uniform prior considered in Proposition \ref{main:lower:bound}, the averages in the numerator are zero as well. Hence, \cite[Theorem 3]{W20} is trivial for the uniform prior. The reason that we can deal with the uniform prior lies in the fact that in addition to the equivariance  of the statistical model, we also require equivariance of the derived parameter and invariance of the loss function.   
\end{remark}

\section{Proof of Proposition \ref{main:lower:bound}}
We provide a proof which manifests Proposition \ref{main:lower:bound} as a Cramér-Rao-type inequality for equivariant estimators.
\subsection{Reduction to a pointwise risk}\label{sec:3.1}

We use \cite[Lemma 4]{W20} in order to reduce the Bayes risk of Proposition \ref{main:lower:bound} to a pointwise risk minimized over the class of all equivariant estimators. For completeness we briefly repeat the (standard) argument. Let $\tilde \psi$ be an arbitrary estimator with values in $A$. Without loss of generality we may restrict ourselves to estimators with bounded norm $\sup_{x\in\mathcal{X}}\|\tilde \psi(x)\|<\infty$ (indeed, by (A2) and (A3) we know that $\sup_{U\in O(p)}\|\psi(U)\|=C<\infty$. Hence, for $x\in\mathcal{X}$ such that $\|\tilde \psi(x)\|> C_w=2C(w_{\max}/w_{\min})^{1/2}$ with $w_{\max}=\max_k w_k<\infty$ and $w_{\min}=\min_k w_k>0$, we have $l_w^{1/2}(U,0)\leq w_{\max}^{1/2} C$, while $l_w^{1/2}(U,\tilde\psi(x))>w_{\min}^{1/2} C_w-w_{\max}^{1/2} C=w_{\max}^{1/2} C$). Hence, we can construct 
\begin{align*}
\hat\psi(x)=\int_{O(p)}V^T\tilde\psi(Vx)\,dV,\qquad x\in\mathcal{X}.
\end{align*}
By \cite[Lemma 4]{W20} this defines an $O(p)$-equivariant estimator (that is, it holds that $\hat\psi(Ux)=U\hat \psi(x)$ for all $x\in\mathcal{X}$ and all $U\in O(p)$) satisfying
\begin{align*}
 \int_{O(p)}\mathbb{E}_Ul_w(U,\tilde{\psi}(X))\,dU\geq \int_{O(p)}\mathbb{E}_Ul_w(U,\hat{\psi}(X))\,dU,
\end{align*}
where we used (A1) and the facts that the loss function $l_w$ is convex in the second argument and satisfies \eqref{eq:invariant:loss:function}. Moreover, using that $\hat\psi$ is $O(p)$-equivariant, it follows again from \cite[Lemma 4]{W20} that the risk $\mathbb{E}_Ul_w(U,\hat{\psi}(X))$ is constant over $U\in O(p)$. Hence, we arrive at 
\begin{align*}
 \inf_{\tilde{\psi}}\int_{O(p)}\mathbb{E}_Ul_w(U,\tilde{\psi}(X))\,dU\geq \inf_{\hat{\psi}\text{ $O(p)$-equivariant}}\mathbb{E}_{I_p}l_w(I_p,\hat{\psi}(X)),
\end{align*}
and it suffices to lower bound the right-hand side.

\subsection{A pointwise Cramér-Rao inequality for equivariant estimators}

The classical Cramér-Rao inequality provides a lower bound for the (co-)variance of unbiased estimators. In this section, we show that in our context, a similar lower bound can be proved for the class of all equivariant estimators.

\begin{lemma}\label{main:lemma}
Assume (A1)--(A7). Let $\xi_1,\dots, \xi_m\in \mathfrak{so}(p)$ be such that $\mathbb{P}_{\exp(t\xi_j)}\ll \mathbb{P}_{I_p}$ for all $j=1,\dots,m$ and all $t$ small enough. Suppose that there are $a_1,\dots,a_m\in(0,\infty]$ such that $\lim_{t\rightarrow 0}\chi^2(\mathbb{P}_{\exp(t\xi_j)},\mathbb{P}_{I_p})/t^2=a_j^{-1}$ for all $j=1,\dots,m$. Then, for any $O(p)$-equivariant estimator $\hat\psi(X)$ with values in $A$, we have
\begin{align*}
\mathbb{E}_{I_p}l_w(I_p,\hat{\psi}(X))\geq \frac{\Big(\sum_{j=1}^m\limits\langle v_j(I_p),d\psi(I_p)\xi_j\rangle\Big)^2}{\sum_{j=1}^m\limits w_j^{-1}a_j^{-1}+\sum_{k=1}^m\limits w_k^{-1}\Big(\sum_{j=1}^m\limits\langle v_k(I_p),dv_j(I_p)\xi_j\rangle\Big)^2}.
\end{align*}
\end{lemma}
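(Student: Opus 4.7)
The plan is to use Cauchy--Schwarz in $L^2(\mathbb{P}_{I_p})$ (the standard Cram\'er--Rao mechanism) applied direction-by-direction to the scalar statistics $\tilde Z_j := \langle v_j(I_p),\hat\psi(X)-\psi(I_p)\rangle$ against the perturbations $\xi_j$, and then to combine the resulting bounds using the $O(p)$-equivariance of $\hat\psi$ (which we may assume after Section~\ref{sec:3.1}). Equivariance implies $\mathbb{E}_U[\hat\psi(X)]=U\,\mathbb{E}_{I_p}[\hat\psi(X)]$ for all $U\in O(p)$, so everything can be expressed through the single bias vector $\tilde\mu^{(\psi)}:=\mathbb{E}_{I_p}\hat\psi(X)-\psi(I_p)\in A$ and its coordinates $\tilde\mu_l:=\langle v_l(I_p),\tilde\mu^{(\psi)}\rangle$ in the orthonormal basis provided by (A4).

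The key computation is the derivative $D_j:=\frac{d}{dt}\big|_{t=0}\mathbb{E}_{\exp(t\xi_j)}[\tilde Z_j]$. Using (A1) together with equivariance of $\hat\psi$, followed by (A3) and (A5), I would first rewrite $\mathbb{E}_{\exp(t\xi_j)}[\tilde Z_j]=\mathbb{E}_{I_p}\langle v_j(\exp(-t\xi_j)),\hat\psi(X)\rangle-\langle v_j(I_p),\psi(I_p)\rangle$. Differentiating at $t=0$ via (A7) (with boundedness of $\hat\psi$ from Section~\ref{sec:3.1} justifying interchange of limit and expectation) gives $D_j=-\langle dv_j(I_p)\xi_j,\psi(I_p)+\tilde\mu^{(\psi)}\rangle$. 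Two differential identities then clean this up: differentiating $\langle v_j(U),\psi(U)\rangle\equiv\langle v_j(I_p),\psi(I_p)\rangle$, which is constant by (A2), (A3), (A5), yields $\langle dv_j(I_p)\xi_j,\psi(I_p)\rangle=-\langle v_j(I_p),d\psi(I_p)\xi_j\rangle$; and differentiating $\langle v_j(U),v_l(U)\rangle\equiv\delta_{jl}$ gives the antisymmetry $\langle dv_j(I_p)\xi_j,v_l(I_p)\rangle=-\langle v_j(I_p),dv_l(I_p)\xi_j\rangle$. Expanding $\tilde\mu^{(\psi)}=\sum_l\tilde\mu_l v_l(I_p)$ and applying both identities gives
\[
D_j=\langle v_j(I_p),d\psi(I_p)\xi_j\rangle+\sum_l\tilde\mu_l\langle v_j(I_p),dv_l(I_p)\xi_j\rangle.
\]
Summing over $j$ and invoking the antisymmetry a second time to swap the inner indices inside the double sum yields $\sum_j D_j=N-\sum_l\tilde\mu_l C_l$, where $N:=\sum_j\langle v_j(I_p),d\psi(I_p)\xi_j\rangle$ is the numerator of the claimed bound and $C_l:=\sum_j\langle v_l(I_p),dv_j(I_p)\xi_j\rangle$ are exactly the coefficients appearing inside the bias term of the denominator.

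For the Cram\'er--Rao step, $|\mathbb{E}_{\exp(t\xi_j)}\tilde Z_j-\mathbb{E}_{I_p}\tilde Z_j|^2=|\mathrm{Cov}_{I_p}(d\mathbb{P}_{\exp(t\xi_j)}/d\mathbb{P}_{I_p}-1,\tilde Z_j)|^2\leq \chi^2(\mathbb{P}_{\exp(t\xi_j)},\mathbb{P}_{I_p})\cdot\mathrm{Var}_{I_p}(\tilde Z_j)$; dividing by $t^2$, sending $t\to 0$ and using \eqref{eq:chi2:Fisher} give $|D_j|^2\leq a_j^{-1}\mathrm{Var}_{I_p}(\tilde Z_j)$. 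Writing $\mathrm{Var}_{I_p}(\tilde Z_j)=\mathbb{E}\tilde Z_j^2-\tilde\mu_j^2$, multiplying by $a_j w_j$, and summing gives $\sum_j a_j w_j D_j^2\leq M-V$ with $M:=\mathbb{E}_{I_p}l_w(I_p,\hat\psi(X))$ and $V:=\sum_j w_j\tilde\mu_j^2$. A weighted Cauchy--Schwarz yields $(\sum_j D_j)^2\leq R(M-V)$ with $R:=\sum_j w_j^{-1}a_j^{-1}$, and a separate Cauchy--Schwarz gives $(\sum_l\tilde\mu_l C_l)^2\leq V\cdot C$ with $C:=\sum_l w_l^{-1} C_l^2$.

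The finish combines these via the triangle inequality and one last Cauchy--Schwarz in $\mathbb{R}^2$:
\[
|N|\leq \Bigl|\sum_j D_j\Bigr|+\Bigl|\sum_l\tilde\mu_l C_l\Bigr|\leq \sqrt{R(M-V)}+\sqrt{CV}\leq \sqrt{R+C}\,\sqrt{(M-V)+V}=\sqrt{(R+C)M},
\]
which on squaring and rearranging gives the claim, since $R+C$ is exactly the denominator in the statement. The main obstacle is not any single step but the alignment of these three Cauchy--Schwarz applications: the two antisymmetry identities must produce $C_l$ in precisely the same form appearing in the denominator, and the final $\mathbb{R}^2$ Cauchy--Schwarz must make $V$ cancel perfectly between the Fisher piece $R(M-V)$ and the bias piece $CV$. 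Without that exact cancellation one would pick up a spurious factor of $2$ and would fail to recover the tight form $R+C$ rather than something like $(\sqrt{R}+\sqrt{C})^{2}$.
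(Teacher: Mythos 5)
Your proof is correct, and it takes a route that is recognizably in the same Cram\'er--Rao spirit but genuinely different in structure from the paper's. The paper works entirely at the level of \emph{finite differences}: it forms the telescoping quantity $\sum_j\mathbb{E}_{I_p}\langle v_j(I_p),\hat\psi(X)-\psi(I_p)\rangle-\sum_j\mathbb{E}_{I_p}\langle v_j(I_p),\hat\psi(X)-\psi(U_j^T)\rangle$, rewrites it two ways (one deterministic, one as an expectation involving the likelihood ratios), applies Parseval in the basis $(v_k(I_p))_k$ and a single weighted Cauchy--Schwarz to peel off the $l_w$-risk, and only then divides by $t^2$ and passes to the limit to identify the Fisher term, the bias term, and a negligible cross term. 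Your proof instead \emph{differentiates first}: you use the equivariance of $\hat\psi$ together with (A1) to reduce everything to the single bias vector $\tilde\mu^{(\psi)}=\mathbb{E}_{I_p}\hat\psi(X)-\psi(I_p)$, compute $D_j$ explicitly via the two product-rule identities (which replaces the paper's $D_k$ and $C_k$ bookkeeping and its explicit negligibility argument), then run a per-direction $\chi^2$ Cram\'er--Rao bound $D_j^2\le a_j^{-1}\operatorname{Var}_{I_p}(\tilde Z_j)$, decompose the risk as $M=V+(M-V)$ into squared bias plus weighted variance, and stitch everything together with three Cauchy--Schwarz applications and the final $\mathbb{R}^2$ inequality $\sqrt{R(M-V)}+\sqrt{CV}\le\sqrt{(R+C)M}$. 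Both yield exactly the claimed constant $R+C=\sum_j w_j^{-1}a_j^{-1}+\sum_k w_k^{-1}C_k^2$. What your version buys is a cleaner conceptual separation into bias and variance and a derivative-level computation that avoids the cross term $C_k$ entirely; what it costs is having to justify interchange of limit and expectation in computing $D_j$ (which you correctly note relies on the boundedness reduction of Section~\ref{sec:3.1} together with (A7)), the extra antisymmetry identities, and the observation that the three Cauchy--Schwarz bounds telescope without loss via the final AM--GM step---none of which is needed when one keeps finite differences throughout, as the paper does. One small bookkeeping point worth flagging: when $a_j=\infty$ (so $a_j^{-1}=0$), the CR step forces $D_j=0$, and the product $a_jw_jD_j^2$ appearing in your weighted Cauchy--Schwarz should be read as $0$; this is consistent with $R$ receiving no contribution from that index, so the argument goes through, but it deserves a sentence.
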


\begin{proof} 
As shown in Section \ref{sec:3.1}, we can restrict ourselves to estimators $\hat\psi$ that are bounded. For $U_j=\exp(t\xi_j)$, $j=1,\dots,m$, consider the expression
\begin{align}\label{eq:basic:expression}
\sum_{j=1}^m\mathbb{E}_{I_p}\langle v_j(I_p),\hat\psi(X)-\psi(I_p)\rangle-\sum_{j=1}^m\mathbb{E}_{I_p}\langle v_j(I_p),\hat\psi(X)-\psi(U_j^T)\rangle.
\end{align}
Clearly \eqref{eq:basic:expression} is equal to
\begin{align}\label{eq:basic:expression:version1}
\sum_{j=1}^m\langle v_j(I_p),\psi(U_j^T)-\psi(I_p)\rangle.
\end{align}
On the other hand, using (A1)--(A3), (A5) and the equivariance of $\hat\psi$, we have
\begin{align*}
&\mathbb{E}_{I_p}\langle v_j(I_p),\hat\psi(X)-\psi(U_j^T)\rangle\\
&=\mathbb{E}_{U_j}\langle v_j(I_p),\hat\psi(U_j^TX)-\psi(U_j^T)\rangle=\mathbb{E}_{U_j}\langle v_j(I_p),U_j^T\hat\psi(X)-U_j^T\psi(I_p)\rangle\\
&=\mathbb{E}_{U_j}\langle v_j(U_j),\hat\psi(X)-\psi(I_p)\rangle=\mathbb{E}_{I_p}\frac{d\mathbb{P}_{U_j}}{d\mathbb{P}_{I_p}}(X)\langle v_j(U_j),\hat\psi(X)-\psi(I_p)\rangle.
\end{align*}
Hence, \eqref{eq:basic:expression} is also equal to
\begin{align}\label{eq:basic:expression:version2}
\mathbb{E}_{I_p}\sum_{j=1}^m\langle v_j(I_p),\hat\psi(X)-\psi(I_p)\rangle-\mathbb{E}_{I_p}\sum_{j=1}^m\frac{d\mathbb{P}_{U_j}}{d\mathbb{P}_{I_p}}(X)\langle v_j(U_j),\hat\psi(X)-\psi(I_p)\rangle.
\end{align}
Using \eqref{eq:basic:expression}--\eqref{eq:basic:expression:version2}, Parseval's identity, (A4) and the Cauchy-Schwarz inequality (twice), we arrive at
\begin{align*}
&\Big(\sum_{j=1}^m\langle v_j(I_p),\psi(U_j^T)-\psi(I_p)\rangle\Big)^2\\
&=\Big(\mathbb{E}_{I_p}\sum_{j=1}^m\langle v_j(I_p),\hat\psi(X)-\psi(I_p)\rangle-\mathbb{E}_{I_p}\sum_{j=1}^m\frac{d\mathbb{P}_{U_j}}{d\mathbb{P}_{I_p}}(X)\langle v_j(U_j),\hat\psi(X)-\psi(I_p)\rangle\Big)^2\\
&=\Big(\mathbb{E}_{I_p}\sum_{k=1}^m\Big\{\sum_{j=1}^m\langle v_j(I_p),v_k(I_p)\rangle-\sum_{j=1}^m\frac{d\mathbb{P}_{U_j}}{d\mathbb{P}_{I_p}}(X)\langle v_j(U_j),v_k(I_p)\rangle\Big\}\langle v_k(I_p),\hat\psi(X)-\psi(I_p)\rangle\Big)^2\\
&\leq \Big(\mathbb{E}_{I_p}\sum_{k=1}^mw_k\langle v_k(I_p),\hat\psi(X)-\psi(I_p)\rangle^2\Big)\\
&\quad\times \Big(\mathbb{E}_{I_p}\sum_{k=1}^mw_k^{-1}\Big\{\sum_{j=1}^m\langle v_j(I_p),v_k(I_p)\rangle-\sum_{j=1}^m\frac{d\mathbb{P}_{U_j}}{d\mathbb{P}_{I_p}}(X)\langle v_j(U_j),v_k(I_p)\rangle\Big\}^2\Big).
\end{align*}
The first term on the right-hand side is equal to the $l_w$-risk of $\hat\psi$ at $I_p$. Moreover, the second term can be written as 
\begin{align*}
&\sum_{k=1}^mw_k^{-1}\Big\{\Big(\sum_{j=1}^m\langle v_j(U_j)-v_j(I_p),v_k(I_p)\rangle\Big)^2+\mathbb{E}_{I_p}\Big(\sum_{j=1}^m\Big(\frac{d\mathbb{P}_{U_j}}{d\mathbb{P}_{I_p}}(X)-1\Big)\langle v_j(U_j),v_k(I_p)\rangle\Big)^2\Big\}.
\end{align*}
In particular, we have proved that
\begin{align}\label{eq:lower:bound:difference}
\mathbb{E}_{I_p}l_w(I_p,\hat{\psi}(X))\geq \frac{D^2}{\sum_{k=1}^mw_k^{-1}(D_k^2+\mathbb{E}_{I_p}(B_k+C_k)^2)}
\end{align}
with 
\begin{align*}
D&=\sum_{j=1}^m\langle v_j(I_p),\psi(U_j^T)-\psi(I_p)\rangle,\\
D_k&=\sum_{j=1}^m\langle v_j(U_j)-v_j(I_p),v_k(I_p)\rangle,\\
B_k&=\sum_{j=1}^m\Big(\frac{d\mathbb{P}_{U_j}}{d\mathbb{P}_{I_p}}(X)-1\Big)\langle v_j(I_p),v_k(I_p)\rangle,\\
C_k&=\sum_{j=1}^m\Big(\frac{d\mathbb{P}_{U_j}}{d\mathbb{P}_{I_p}}(X)-1\Big)\langle v_j(U_j)-v_j(I_p),v_k(I_p)\rangle.
\end{align*}
We now invoke a limiting argument to deduce Lemma \ref{main:lemma} from \eqref{eq:lower:bound:difference}. For this, recall that $U_j=\exp(t\xi_j)$, $\xi_j\in \mathfrak{so}(p)$, multiply numerator and denominator by $1/t^2$ and let $t\rightarrow 0$. First, by (A6) and (A7), we have
\begin{align*}
&\frac{1}{t}D\rightarrow -\sum_{j=1}^m\langle v_j(I_p),d\psi(I_p)\xi_j\rangle,\\
&\frac{1}{t}D_k\rightarrow\sum_{j=1}^m\langle v_k(I_p), dv_j(I_p)\xi_j\rangle
\end{align*}
as $t\rightarrow 0$. Moreover, by assumption \eqref{eq:chi2:Fisher}, we have
\begin{align*}
\frac{1}{t^2}\sum_{k=1}^m w_k^{-1}\mathbb{E}_{I_p}B_k^2=\frac{1}{t^2}\sum_{j=1}^m w_j^{-1}\chi^2(\mathbb{P}_{U_j},\mathbb{P}_{I_p})\rightarrow \sum_{j=1}^mw_j^{-1}a_j^{-1}\quad \text{as } t\rightarrow 0.
\end{align*}
On the other hand, $C_k$ is asymptotically negligible, as can be seen from
\begin{align*}
\frac{1}{t^2}\mathbb{E}_{I_p}C_k^2\leq \frac{1}{t^2}\Big(\sum_{j=1}^m\chi^2(\mathbb{P}_{U_j},\mathbb{P}_{I_p})\Big)\Big(\sum_{j=1}^m\langle v_j(U_j)-v_j(I_p),v_k(I_p)\rangle^2\Big)\rightarrow 0
\end{align*}
as $t\rightarrow 0$. Here, we used \eqref{eq:chi2:Fisher} and (A7). Thus, 
\begin{align*}
\frac{1}{t^2}\Big|\sum_{k=1}^m& w_k^{-1}\mathbb{E}_{I_p}(B_k+C_k)^2-\sum_{k=1}^m w_k^{-1}\mathbb{E}_{I_p}B_k^2\Big|\\
\leq \frac{1}{t^2}\sum_{k=1}^m& w_k^{-1}(2(\mathbb{E}_{I_p}B_k^2)^{1/2}(\mathbb{E}_{I_p}C_k^2)^{1/2}+\mathbb{E}_{I_p}C_k^2)\rightarrow 0
\end{align*}
as $t\rightarrow 0$. The proof now follows from inserting these limits into \eqref{eq:lower:bound:difference}.
\end{proof}

\section{Applications}\label{sec:applications}
In this section, we specialize our lower bounds in the context of principal component analysis (PCA) and a low-rank denoising model. In doing so, we will focus on the derived parameter 
\begin{align*}
\psi(U)=P_{\leq d}(U)=\sum_{i\leq d}u_iu_i^T,\qquad U\in O(p),
\end{align*}
where $1\leq d\leq p$ and $u_1,\dots ,u_p$ are the columns of $U\in O(p)$. This will correspond to the estimation of the $d$-th principal subspace. We discuss several loss functions based on the Hilbert-Schmidt distance and the excess risk in the reconstruction error.
\subsection{PCA and the subspace distance}
In this section, we consider the statistical model given in \eqref{eq_stat_experiment}
\begin{equation*}
(\mathbb{P}_U)_{U\in O(p)},\qquad\mathbb{P}_U=\mathcal{N}(0, U\Lambda U^T)^{\otimes n},
\end{equation*} 
with $\Lambda=\operatorname{diag}(\lambda_1,\dots,\lambda_p)$ and $\lambda_1\geq \dots\geq \lambda_p> 0$. The following theorem proved in Section \ref{sec:proof:appli} applies Proposition \ref{main:lower:bound} to the above model, derived parameter $P_{\leq d}$, and loss function given by the Hilbert-Schmidt distance (cf.~Section \ref{sec:proof:appli:specialization} below).
\begin{theorem}\label{theorem1} Consider the statistical model \eqref{eq_stat_experiment}. Then, for each $\delta>0$, we have 
\begin{align*}
\inf_{\hat P}\int_{O(p)} \mathbb{E}_U\|\hat{P}-P_{\leq d}(U)\|_2^2\,dU\geq I_\delta
\end{align*}
with infimum taken over all $\mathbb{R}^{p\times p}$-valued estimators $\hat P=\hat P(X_1,\dots,X_n)$ and
\begin{align*}
 I_\delta = \frac{1}{1+2\delta}\max\Big\{\sum_{i\leq d}\limits\sum_{j>d}\limits x_{ij}\ :\ & 0\leq x_{ij}\leq \tfrac{2}{n}\tfrac{\lambda_i\lambda_j}{(\lambda_i-\lambda_j)^2}\quad \text{for all }i\leq d,j>d,\\
                          & \sum_{i\leq d}\limits x_{ij}\leq \delta\quad \text{for all }j>d, \\
                         & \sum_{j> d}\limits x_{ij}\leq \delta\quad \text{for all }i\leq d\Big\}.
\end{align*}
\end{theorem}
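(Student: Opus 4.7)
The plan is to specialize Proposition~\ref{main:lower:bound} with derived parameter $\psi(U)=P_{\leq d}(U)$ and with $A$ equal to the space of symmetric $p\times p$ matrices equipped with the Hilbert--Schmidt inner product and the conjugation action $V\cdot M=VMV^T$, so that \eqref{eq:choice:w1:HS:norm} identifies the Hilbert--Schmidt loss in the theorem with $l_w$ for the uniform choice $w=(1,\dots,1)$. I would take as an equivariant orthonormal basis $v_{aa}(U)=u_au_a^T$ and $v_{ab}(U)=(u_au_b^T+u_bu_a^T)/\sqrt{2}$ for $a<b$, where $u_1,\dots,u_p$ are the columns of $U$. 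Conditions (A1)--(A5) are then routine (equivariance of the Gaussian model under $X\mapsto VX$, equivariance of the spectral projector, and isometry of the conjugation action on $A$), and (A6)--(A7) follow from the polynomial dependence of the basis on $U$.

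For the tangent directions I use the skew-symmetric generators $\xi_{ik}=e_ie_k^T-e_ke_i^T\in\mathfrak{so}(p)$, $i<k$. Three short computations form the technical heart: first, $dP_{\leq d}(I_p)\xi_{ik}=[\xi_{ik},P_{\leq d}(I_p)]=-(e_ie_k^T+e_ke_i^T)=-\sqrt{2}\,v_{ik}(I_p)$ whenever $i\leq d<k$ and vanishes otherwise; second, $dv_{ab}(I_p)\xi=[\xi,v_{ab}(I_p)]$ gives $dv_{ik}(I_p)\xi_{ik}=\sqrt{2}(e_ie_i^T-e_ke_k^T)$, a diagonal matrix that only couples to $v_{ii}(I_p)$ and $v_{kk}(I_p)$; third, with $\Sigma_t=\exp(t\xi_{ik})\Lambda\exp(-t\xi_{ik})$ one has $\dot\Sigma_0=(\lambda_k-\lambda_i)(e_ie_k^T+e_ke_i^T)$, and the Gaussian Fisher information $\tfrac{n}{2}\operatorname{tr}(\Lambda^{-1}\dot\Sigma_0\Lambda^{-1}\dot\Sigma_0)=n(\lambda_i-\lambda_k)^2/(\lambda_i\lambda_k)$ equals the limit in \eqref{eq:chi2:Fisher}.

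Now, for each pair $(i,k)$ with $i\leq d<k$ I pair the basis element $v_{ik}$ with $\xi=t_{ik}\xi_{ik}$, where the scalar $t_{ik}\geq 0$ is to be chosen, and pair every remaining basis element with $\xi=0$. Substituting the three computations into Proposition~\ref{main:lower:bound} with $w_k=1$ yields
\begin{equation*}
\int_{O(p)}\mathbb{E}_U\|\hat P-P_{\leq d}(U)\|_2^2\,dU\ \geq\ \frac{2\bigl(\sum_{i\leq d<k}t_{ik}\bigr)^2}{n\sum_{i\leq d<k}t_{ik}^2\tfrac{(\lambda_i-\lambda_k)^2}{\lambda_i\lambda_k}+2\sum_{i\leq d}\bigl(\sum_{k>d}t_{ik}\bigr)^2+2\sum_{k>d}\bigl(\sum_{i\leq d}t_{ik}\bigr)^2},
\end{equation*}
where the last two terms in the denominator arise from $\sum_\alpha\bigl(\sum_j\langle v_\alpha(I_p),dv_j(I_p)\xi_j\rangle\bigr)^2$, collapsed to the diagonal basis indices using the second computation.

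The closing step is to match this lower bound with $I_\delta$: for any feasible $(x_{ij})$ in the maximization defining $I_\delta$, I would substitute $t_{ik}=x_{ik}$. The per-entry bound $x_{ik}\leq 2\lambda_i\lambda_k/(n(\lambda_i-\lambda_k)^2)$ makes the Fisher sum at most $2\sum x_{ik}$, while the row constraint $\sum_{k>d}x_{ik}\leq\delta$ gives $\sum_i\bigl(\sum_k x_{ik}\bigr)^2\leq\delta\sum x_{ik}$, and the column constraint gives the symmetric bound. The denominator is therefore at most $2(1+2\delta)\sum x_{ik}$, which produces the bound $\sum x_{ik}/(1+2\delta)$; taking the supremum over feasible $x$ recovers $I_\delta$. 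The main obstacle I anticipate is the basis-drift term itself, which has no counterpart in classical van Trees bounds but is intrinsic to Proposition~\ref{main:lower:bound}; it is exactly what produces the row and column sum constraints in the definition of $I_\delta$. The clean closing move, taking $t_{ik}=x_{ik}$ directly, is what ties the per-entry Fisher cap and the drift penalties into the single factor $(1+2\delta)$.
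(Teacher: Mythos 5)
Your proof is correct, and it takes a slightly different but closely related route from the paper's. The paper works with $A=\mathbb{R}^{p\times p}$ and the non-symmetric rank-one basis $v_{kl}(U)=u_ku_l^T$, which forces it to pair two opposite tangent directions $\xi^{(ij)}=y_{ij}L^{(ij)}$ and $\xi^{(ji)}=y_{ji}L^{(ij)}$ with $v_{ij}$ and $v_{ji}$ separately, and then optimize over $y_{ij}+y_{ji}=z_{ij}$ (the small auxiliary minimization of $a^{-1}x^2+b^{-1}y^2$ with $x+y=z$ at the end of Corollary~\ref{cor:lower:bound}). You instead work in the space of symmetric matrices with the symmetrized orthonormal basis $v_{ab}(U)=(u_au_b^T+u_bu_a^T)/\sqrt{2}$, which halves the index set, makes the antisymmetric directions $\xi_{ik}$ couple directly to the single basis element $v_{ik}$, and bypasses the $y_{ij}/y_{ji}$ splitting entirely. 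Your computations of $dP_{\leq d}(I_p)\xi_{ik}=-\sqrt{2}\,v_{ik}(I_p)$, of the drift $dv_{ik}(I_p)\xi_{ik}=\sqrt{2}(e_ie_i^T-e_ke_k^T)$ (which couples only to diagonal basis elements, producing exactly the row/column-sum penalties), and of the Fisher information $n(\lambda_i-\lambda_k)^2/(\lambda_i\lambda_k)$ all check out, and plugging in $t_{ik}=x_{ik}$ with the three feasibility constraints gives the claimed factor $1/(1+2\delta)$. One thing your streamlined route gives up: the paper's Corollary~\ref{cor:lower:bound} is stated for a general weight matrix $w$, which is later needed for the excess-risk Theorem~\ref{theorem2}; your symmetric-basis specialization works cleanly here precisely because $w\equiv 1$ lets the $v_{ij}$ and $v_{ji}$ weights collapse. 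For Theorem~\ref{theorem1} alone, your argument is complete and arguably the tidier one.
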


\begin{remark}
We write $i\leq d$ for $i\in \{1,\dots,d\}$ and $j>d$ for $j\in \{d+1,\dots,p\}$.
\end{remark}

\begin{remark}
A (non-square) matrix $(x_{ij})$ is called doubly substochastic (cf.~\cite[Section~2]{B97}) if
\begin{eqnarray*}
& x_{ij}\geq 0 & \text{for all}\quad i,j,\\
& \sum_{i}\limits x_{ij}\leq 1\quad & \text{for all}\quad j, \\
& \sum_{j}\limits x_{ij}\leq 1\quad & \text{for all}\quad i. 
\end{eqnarray*}
Hence, choosing $\delta=1$, Theorem \ref{theorem1} holds with
\begin{align*}
I_1= \frac{1}{3}\max\Big\{\sum_{i\leq d}\sum_{j>d} x_{ij}:\ &(x_{ij}) \text{ doubly substochastic with }\\
& x_{ij}\leq \tfrac{2}{n}\tfrac{\lambda_i\lambda_j}{(\lambda_i-\lambda_j)^2}\quad \text{for all }i\leq d,j>d\Big\}.
\end{align*}
\end{remark}

\begin{remark}
That doubly substochastic matrices play a role is no coincidence. Such a structure also appears in the upper bounds for the principal subspaces of the empirical covariance operator (see, e.g., \cite{MR4102689}). To explain this, let $X_1,\dots,X_n$ be independent Gaussian random variables with expectation zero and covariance matrix $\Sigma$ and let $\hat\Sigma=n^{-1}\sum_{i=1}^nX_iX_i^T$ be the empirical covariance operator. Moreover, let $\lambda_1\geq \dots\geq \lambda_p$ (resp.~$\hat\lambda_1\geq \dots\geq \hat\lambda_p$) be the eigenvalues of $\Sigma$ (resp.~$\hat\Sigma$) and let $u_1,\dots,u_p$ (resp.~$\hat u_1,\dots,\hat u_p$) be the corresponding eigenvectors of $\Sigma$ (resp.~$\hat\Sigma$). Then, for $P_{\leq d}=\sum_{i\leq d}u_iu_i^T$ and $\hat P_{\leq d}=\sum_{i\leq d}\hat u_i\hat u_i^T$, we have (cf.~\cite{MR4052188,MR4102689})
\begin{align*}
\|\hat P_{\leq d}-P_{\leq d}\|_2^2=2\sum_{i\leq d}\sum_{j>d}x_{ij}\quad\text{with}\quad x_{ij}=\langle u_i,\hat u_j\rangle^2.
\end{align*}
There are two completely different possibilities to bound this squared Hilbert-Schmidt distance. First, by Bessel's inequality, we always have the trivial bounds 
\begin{align*}
 \sum_{i\leq d}x_{ij}\leq 1\quad\text{and}\quad \sum_{j> d}x_{ij}\leq 1.
\end{align*}
On the other hand, using perturbative methods, one has the central limit theorem
\begin{align*}
nx_{ij}=n\langle u_i,\hat u_j\rangle^2\stackrel{d}{\rightarrow}\mathcal{N}\Big(0,\frac{\lambda_i\lambda_j}{(\lambda_i-\lambda_j)^2}\Big),
\end{align*}
(see, e.g., \cite{And}, and also \cite{JW18} for a non-asymptotic version of this result). Hence, the lower bound in Theorem \ref{theorem1} can be interpreted as the fact that we can not do essentially better than the best mixture of trivial and perturbative bounds.
\end{remark}

\begin{remark}
A simple and canonical choice of the $x_{ij}$ in Theorem \ref{theorem1} is given by
\begin{align*}
 x_{ij}=\min\Big(\frac{2}{n}\frac{\lambda_i\lambda_j}{(\lambda_i-\lambda_j)^2},\frac{1}{p}\Big),
\end{align*}
in which case we rediscover the lower bound \cite[Theorem 1]{W20}. Yet, let us point out that the result in Theorem 2 is stronger in the sense that it allows for priors that are highly concentrated around $I_p$ (e.g.~$h$ of size $\sqrt{n}$), while Theorem \ref{theorem1} provides a lower bound for the uniform prior. 
\end{remark}

\begin{remark}
In general it seems difficult to find a simple closed form expression for the lower bound in Theorem \ref{theorem1}. For instance, an exception is given by the case $d=1$, in which case we have
\begin{align*}
I_\delta=\frac{1}{1+2\delta}\min\Big(\frac{2}{n}\sum_{j>1}\frac{\lambda_1\lambda_j}{(\lambda_1-\lambda_j)^2}, \delta\Big).
\end{align*}
\end{remark}

\begin{remark}
Using decision-theoretic arguments, the result can be extended to random variables with values in a Hilbert space; see \cite[Section 1.4]{W20} for the details.
\end{remark}

\subsection{PCA and the excess risk}
Theorem \ref{theorem1} provides a lower bound for the squared Hilbert-Schmidt distance $\|\hat{P}-P_{\leq d}(U)\|_2^2$. If the estimator $\hat P$ is itself an orthogonal projection of rank $d$, then $\|\hat{P}-P_{\leq d}(U)\|_2^2$ is equal to $\sqrt{2}$ times the Euclidean norm of the sines of the canonical angles between the corresponding subspaces (see, e.g., \cite[Chapter VII.1]{B97}). This so-called $\sin\Theta$ distance is a well-studied distance in linear algebra, numerical analysis and statistics (see, e.g., \cite{B97,I00,YWS15}). In the context of statistical learning, another important loss function arises if one introduces PCA as an empirical risk minimization problem with respect to the reconstruction error.

For $1\leq d\leq p$, let $\mathcal{P}_d$ be the set of all orthogonal projections $P:\mathbb{R}^p\rightarrow \mathbb{R}^p$ of rank $d$. Consider the statistical model defined by \eqref{eq_stat_experiment}. Then the reconstruction error is defined by
\begin{align*}
 R_U(P)=\mathbb{E}_U\|X-PX\|^2,\qquad P\in\mathcal{P}_d, U\in O(p),
\end{align*}
where $X$ is a random variable with distribution $\mathcal{N}(0,U\Lambda U^T)$ (under $\mathbb{E}_U$), and it is easy to see that (cf.~\cite{MR4102689})
\begin{align*}
P_{\leq d}(U)\in \arg\min_{P\in\mathcal{P}_d}R_U(P).
\end{align*}
Hence, the performance of $\hat P\in\mathcal{P}_d$ can be measured by its excess risk defined by
\begin{align}\label{eq:excess:risk:def}
 \mathcal{E}_U(\hat P)=R_U(\hat P)-\min_{P\in\mathcal{P}_d}R_U(P)=R_U(\hat P)-R_U(P_{\leq d}(U)).
\end{align}
In Section \ref{proof:theorem}, we show that $\mathcal{E}_U(\hat P)$ can be written in the form $l_w$ for some suitable choices for $A$, $v$ and $w$, and Proposition \ref{main:lower:bound} yields the following theorem.
\begin{theorem}\label{theorem2} Consider the statistical model \eqref{eq_stat_experiment} with the excess risk loss function from \eqref{eq:excess:risk:def}. Assume that $\lambda_1>\lambda_{d+1}$ and $\lambda_d>\lambda_p$. Let $1\leq r\leq d$ be the largest natural number such that $\lambda_r>\lambda_{d+1}$ and $d\leq s< p$ be the smallest natural number such that $\lambda_d>\lambda_{s+1}$. Then, for any $\mu\in[\lambda_{d+1},\lambda_d]$, we have
\begin{align*}
\inf_{\hat P}\int_{O(p)} \mathbb{E}_U\mathcal{E}_U(\hat P)\,dU\geq J_\mu
\end{align*}
with infimum taken over all $\mathcal{P}_d$-valued estimators $\hat P=\hat P(X_1,\dots,X_n)$ and
\begin{align*}
 J_\mu = \frac{1}{3}\max\Big\{\sum_{i\leq r}\limits\sum_{j>s}\limits x_{ij}\ :\ \ & 0\leq x_{ij}\leq \tfrac{1}{n}\tfrac{\lambda_i\lambda_j}{\lambda_i-\lambda_j}\quad \text{for all }i\leq r,j>s,\\
                          & \sum_{i\leq r}\limits x_{ij}\leq \mu-\lambda_j\quad \text{for all }j>s, \\
                         & \sum_{j> s}\limits x_{ij}\leq \lambda_i-\mu\quad \text{for all }i\leq r\Big\}.
\end{align*}
\end{theorem}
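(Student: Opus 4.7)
The plan is to realise $\mathcal{E}_U(\hat P)$ as an $l_w$-type loss, apply Proposition~\ref{main:lower:bound} with a one-parameter family of skew-symmetric perturbations $\xi_{(i,j)}$ indexed by $(i,j)$ with $i\leq r$ and $j>s$, and reduce the resulting optimisation to the linear program defining $J_\mu$.

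The first step is the reformulation. Using $R_U(P)=\operatorname{tr}(\Sigma)-\operatorname{tr}(P\Sigma)$ with $\Sigma=U\Lambda U^T$ together with $\operatorname{tr}\hat P=\operatorname{tr}P_{\leq d}(U)=d$, one can insert $\mu\in[\lambda_{d+1},\lambda_d]$ to symmetrise the excess risk into $\mathcal{E}_U(\hat P)=\sum_{i\leq d}(\lambda_i-\mu)(1-b_i)+\sum_{i>d}(\mu-\lambda_i)b_i$ with $b_i=\langle u_i,\hat Pu_i\rangle$. For $\hat P\in\mathcal{P}_d$ one has $1-b_i=\|(\hat P-P_{\leq d}(U))u_i\|^2$ for $i\leq d$ and $b_i=\|(\hat P-P_{\leq d}(U))u_i\|^2$ for $i>d$, so expanding in the $O(p)$-equivariant orthonormal basis $v_{ii}(U)=u_iu_i^T$, $v_{ij}(U)=(u_iu_j^T+u_ju_i^T)/\sqrt 2$ ($i<j$) of the space $A$ of symmetric matrices and symmetrising indices yields
\[
\mathcal{E}_U(\hat P)=\sum_{i\leq j}w_{ij}\langle v_{ij}(U),\hat P-P_{\leq d}(U)\rangle^2
\]
with $w_{ii}=c_i$, $w_{ij}=(c_i+c_j)/2$ and $c_i=\lambda_i-\mu$ ($i\leq d$), $c_i=\mu-\lambda_i$ ($i>d$). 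This realises $\mathcal{E}_U=l_w$ on $\mathcal{P}_d$, and (A1)--(A7) are routine for the action $V\cdot a=VaV^T$. When some $c_i=0$ (the ``middle block'' $\lambda_i=\mu$) the weights fail the strict positivity of Proposition~\ref{main:lower:bound}; I handle this by replacing $w$ by $w+\varepsilon$ and sending $\varepsilon\to 0$, noting that on $\mathcal{P}_d$ we have $\|\hat P-P_{\leq d}(U)\|_2^2\leq 4d$, so the correction to the $l_w$-risk is at most $4d\varepsilon$ and disappears in the limit.

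Second, I apply Proposition~\ref{main:lower:bound} with perturbations $\xi_{(i,j)}=t_{ij}(E_{ij}-E_{ji})/\sqrt 2$ for $(i,j)\in J:=\{(i,j):i\leq r,\ j>s\}$ and $\xi_k=0$ for all other basis indices, where $t_{ij}\geq 0$ are free. The Gaussian Fisher information in direction $\xi_{(i,j)}$ is $a_{(i,j)}^{-1}=nt_{ij}^2(\lambda_i-\lambda_j)^2/(2\lambda_i\lambda_j)$, and the commutator identities $[\xi_{(i,j)},P_0]=-t_{ij}\sqrt 2\,v_{ij}(I_p)$ and $[\xi_{(i,j)},v_{ij}(I_p)]=t_{ij}(v_{ii}(I_p)-v_{jj}(I_p))$ pin down $d\psi(I_p)\xi_{(i,j)}$ and $dv_{(i,j)}(I_p)\xi_{(i,j)}$. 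The structural point is that $dv_{(i,j)}(I_p)\xi_{(i,j)}$ is diagonal in the $u$-basis, so the cross-term $\sum_k w_k^{-1}(\sum_j\langle v_k,dv_j\xi_j\rangle)^2$ receives contributions only from diagonal basis vectors $v_{(a,a)}$ with $a\leq r$ (via $\delta_{a=i}$) or $a>s$ (via $\delta_{a=j}$), with weights $\lambda_a-\mu$ and $\mu-\lambda_a$; middle-block $v_{(a,a)}$ contribute zero, which is what makes the above $\varepsilon$-limit benign. Combining this with $w_{(i,j)}=(\lambda_i-\lambda_j)/2$ on $J$, Proposition~\ref{main:lower:bound} delivers
\[
\int_{O(p)}\mathbb{E}_U\mathcal{E}_U(\hat P)\,dU\geq \frac{\bigl(\sum_{(i,j)\in J}t_{ij}\bigr)^2}{\sum\limits_{(i,j)\in J}\tfrac{nt_{ij}^2(\lambda_i-\lambda_j)}{\lambda_i\lambda_j}+\sum\limits_{i\leq r}\tfrac{(\sum_{j>s}t_{ij})^2}{\lambda_i-\mu}+\sum\limits_{j>s}\tfrac{(\sum_{i\leq r}t_{ij})^2}{\mu-\lambda_j}}.
\]

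Finally, for any $(x_{ij})$ feasible in the linear program of Theorem~\ref{theorem2}, I set $t_{ij}=x_{ij}$. The pointwise bound $x_{ij}\leq \lambda_i\lambda_j/(n(\lambda_i-\lambda_j))$ turns the Fisher term into $\sum x_{ij}\cdot[nx_{ij}(\lambda_i-\lambda_j)/(\lambda_i\lambda_j)]\leq T$, where $T:=\sum_{(i,j)\in J}x_{ij}$, while the substochastic constraints $\sum_{j>s}x_{ij}\leq \lambda_i-\mu$ and $\sum_{i\leq r}x_{ij}\leq \mu-\lambda_j$ make each of the two diagonal cross-terms at most $T$ as well. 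The denominator is therefore at most $3T$, the ratio is at least $T^2/(3T)=T/3$, and taking the supremum over feasible $x$ yields $J_\mu$. The main obstacle I anticipate is the tied-eigenvalue regime $r<d$ or $s>d$: the derived parameter $P_{\leq d}(U)$ is only partially identifiable and the $l_w$-weights vanish on a whole block, so one must verify both that the $\varepsilon$-perturbation is harmless and that the basis-derivative cross-terms localise away from the middle block, which is precisely what produces the doubly substochastic row/column structure in the final LP.
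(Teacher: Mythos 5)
Your proposal is correct and follows essentially the same route as the paper: reformulate the excess risk as an $l_w$-type weighted loss (the paper cites \cite[Lemma 2.6]{MR4102689}; you re-derive it from the trace identity), apply the van Trees Proposition~\ref{main:lower:bound} (the paper via Corollary~\ref{cor:lower:bound}) with skew perturbations indexed by $i\le r$, $j>s$, handle the vanishing middle-block weights by a continuity/$\varepsilon$ limit (Remark~\ref{remark:cor:lower:bound} in the paper), and bound each of the three denominator terms by $T=\sum x_{ij}$ to obtain the $1/3$. The only differences are cosmetic: you take $A$ to be symmetric matrices with the orthonormal basis $v_{ij}=(u_iu_j^T+u_ju_i^T)/\sqrt 2$ rather than the paper's $A=\mathbb{R}^{p\times p}$ with basis $u_ku_l^T$ (where the symmetry is recovered by pairing $\xi^{(ij)},\xi^{(ji)}$ and optimizing over their split), and your intermediate commutator $[\xi_{(i,j)},P_0]=-t_{ij}\sqrt 2\,v_{ij}(I_p)$ carries a spurious $\sqrt 2$ (it should be $-t_{ij}v_{ij}(I_p)$), though this slip does not propagate to your final displayed bound, which matches the paper's.
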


\begin{remark}
The lower bound is similar to the mixture bounds established in \cite{MR4102689}. In particular, as in the case of the Hilbert-Schmidt distance, the term $n^{-1}\lambda_i\lambda_j/(\lambda_i-\lambda_j)$ corresponds to the size of certain weighted projector norms, while the other two constrains correspond to trivial bounds. Hence, our lower bound strenghtens the reciprocal dependence of the excess risk on spectral gaps (the excess risk might be small in both cases, small and large gaps).
\end{remark}
 An important special case is given when $\lambda_d>\lambda_{d+1}$ (meaning that $r=s=d$) and when the last two restrictions in $J_\mu$ are satisfied for $x_{ij}=n^{-1}\lambda_i\lambda_j/(\lambda_i-\lambda_j)$. In particular, letting $\mu=(\lambda_d+\lambda_{d+1})/2$, they are satisfied if and only if 
\begin{align*}
 \frac{\lambda_{d+1}}{\mu-\lambda_{d+1}}\sum_{i\leq d} \frac{\lambda_i}{\lambda_i-\lambda_{d+1}}\leq n\quad\text{and}\quad \frac{\lambda_{d}}{\lambda_{d}-\mu}\sum_{j>d} \frac{\lambda_j}{\lambda_d-\lambda_{j}}\leq n,
\end{align*}
as can be seen from a monotonicity argument. A simple modification leads to the following corollary.
\begin{corollary}\label{cor:rel:rank} We have 
\begin{align*}
\inf_{\hat P}\int_{O(p)} \mathbb{E}_U\mathcal{E}_U(\hat P)\,dU\geq \frac{1}{3n}\sum_{i\leq d}\sum_{j>d}\frac{\lambda_i\lambda_j}{\lambda_i-\lambda_j},
\end{align*}
provided that
\begin{align}\label{eq:rel:rank:condition}
\frac{\lambda_{d}}{\lambda_d-\lambda_{d+1}}\Big(\sum_{i\leq d} \frac{\lambda_i}{\lambda_i-\lambda_{d+1}}+\sum_{j>d} \frac{\lambda_j}{\lambda_d-\lambda_{j}}\Big)\leq \frac{n}{2}.
\end{align}
\end{corollary}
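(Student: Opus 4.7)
The plan is to derive the corollary from Theorem \ref{theorem2} by substituting the canonical feasible point $x_{ij}^\star = n^{-1}\lambda_i\lambda_j/(\lambda_i-\lambda_j)$ together with the symmetric choice $\mu = (\lambda_d+\lambda_{d+1})/2$, and then reducing the row and column constraints in the definition of $J_\mu$ to a single pair of scalar inequalities via a monotonicity argument.

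First, since the left-hand side of \eqref{eq:rel:rank:condition} is finite, we must have $\lambda_d > \lambda_{d+1}$, which in particular yields $\lambda_1 > \lambda_{d+1}$ and $\lambda_d > \lambda_p$, so the hypotheses of Theorem \ref{theorem2} are met and, moreover, $r=s=d$. Choosing $\mu = (\lambda_d+\lambda_{d+1})/2 \in [\lambda_{d+1},\lambda_d]$ gives $\mu-\lambda_{d+1} = \lambda_d-\mu = (\lambda_d-\lambda_{d+1})/2$.

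Next, I verify feasibility of $x_{ij}^\star$ in the maximization problem defining $J_\mu$. The pointwise upper bound $x_{ij}^\star \leq n^{-1}\lambda_i\lambda_j/(\lambda_i-\lambda_j)$ holds with equality. For the column constraint $\sum_{i\leq d}x_{ij}^\star\leq \mu-\lambda_j$, I use that the map $\lambda\mapsto \lambda_i\lambda/(\lambda_i-\lambda)$ is increasing on $(0,\lambda_i)$ (its derivative equals $\lambda_i^2/(\lambda_i-\lambda)^2>0$), while $\mu-\lambda_j$ is decreasing in $\lambda_j$; hence the constraint is tightest at $j=d+1$. A symmetric computation shows that $\lambda\mapsto \lambda\,\lambda_j/(\lambda-\lambda_j)$ is decreasing on $(\lambda_j,\infty)$, so the row constraint $\sum_{j>d}x_{ij}^\star\leq \lambda_i-\mu$ is tightest at $i=d$. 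It therefore suffices to verify
\[
\frac{\lambda_{d+1}}{\lambda_d-\lambda_{d+1}}\sum_{i\leq d}\frac{\lambda_i}{\lambda_i-\lambda_{d+1}}\leq \frac{n}{2}
\quad\text{and}\quad
\frac{\lambda_d}{\lambda_d-\lambda_{d+1}}\sum_{j>d}\frac{\lambda_j}{\lambda_d-\lambda_j}\leq \frac{n}{2}.
\]
The second inequality is one half of \eqref{eq:rel:rank:condition}, and after bounding $\lambda_{d+1}\leq \lambda_d$ in the prefactor of the first, it too is implied by \eqref{eq:rel:rank:condition} (whose left-hand side majorises the sum of the two resulting upper bounds).

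Plugging the feasible point into Theorem \ref{theorem2} then yields
\[
\inf_{\hat P}\int_{O(p)}\mathbb{E}_U\mathcal{E}_U(\hat P)\,dU \,\geq\, J_\mu \,\geq\, \frac{1}{3}\sum_{i\leq d}\sum_{j>d}x_{ij}^\star \,=\, \frac{1}{3n}\sum_{i\leq d}\sum_{j>d}\frac{\lambda_i\lambda_j}{\lambda_i-\lambda_j}.
\]
The entire argument is a direct substitution into Theorem \ref{theorem2}; the only step needing care is identifying the correct extremal indices $(i,j)=(d,d+1)$ in the monotonicity reduction, which is precisely where the prefactor $\lambda_d/(\lambda_d-\lambda_{d+1})$ in \eqref{eq:rel:rank:condition} originates.
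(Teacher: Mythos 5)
Your proof is correct and follows the same route the paper takes: set $r=s=d$, choose $\mu=(\lambda_d+\lambda_{d+1})/2$, plug the canonical feasible point $x_{ij}^\star=n^{-1}\lambda_i\lambda_j/(\lambda_i-\lambda_j)$ into $J_\mu$, and reduce the row/column constraints to the two extremal ones at $(i,j)=(d,d+1)$ via monotonicity. The paper states the same two scalar conditions (with $\mu-\lambda_{d+1}=\lambda_d-\mu=(\lambda_d-\lambda_{d+1})/2$) and then passes to \eqref{eq:rel:rank:condition} by the observation $\lambda_{d+1}\leq\lambda_d$; you merely spell out the derivative computations and the final majorisation in more detail.
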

%
%
%
Condition \eqref{eq:rel:rank:condition} is the main condition of \cite{MR4102689} under which perturbation bounds for the empirical covariance operator are developed (cf.~\cite[Remark 3.15]{MR4102689}). The involved eigenvalue expressions in Corollary \ref{cor:rel:rank} can be easily evaluated if the $\lambda_j$ have exponential or polynomial decay (cf.~\cite{RW18}). 
\begin{example}
If for some $\alpha>0$, we have $\lambda_j=e^{-\alpha j}$, $j=1,\dots,p$, then there are constants $c_1,c_2>0$ depending only on $\alpha$ such that 
\begin{align*}
\inf_{\hat P}\int_{O(p)} \mathbb{E}_U\mathcal{E}_U(\hat P)\,dU \geq c_1\frac{de^{-\alpha d}}{n},\quad\text{provided that}\quad d\leq c_2 n.
\end{align*}
A matching upper bound can be found in \cite[Section 2.4]{MR4102689}.
\end{example}

\begin{example}
Moreover, if for some $\alpha>0$, we have $\lambda_j=j^{-\alpha-1}$, $j=1,\dots,p$, then there are constants $c_1,c_2>0$ depending only on $\alpha$ such that 
\begin{align*}
\inf_{\hat P}\int_{O(p)} \mathbb{E}_U\mathcal{E}_U(\hat P)\,dU \geq c_1\frac{d^{2-\alpha}}{n},\quad\text{provided that}\quad d^2\log d\leq c_2 n.
\end{align*}
A matching upper bound (yet under a more restrictive condition on $d$) can be found in \cite[Section 2.4]{MR4102689}. The condition on $d$ can be further relaxed by using \cite{MR4052188}. 
\end{example}

\subsection{Low-rank matrix denoising}
For a diagonal matrix  $\Lambda=\operatorname{diag}(\lambda_1,\dots,\lambda_p)$ with $\lambda_1\geq \dots\geq \lambda_p\geq 0$, consider the family of probability measures $(\mathbb{P}_U)_{U\in O(p)}$ with $\mathbb{P}_U$ being the distribution of
\begin{align*}
X=U\Lambda U^T+\sigma W,
\end{align*}
where $\sigma>0$ and $W=(W_{ij})_{1\leq i,j\leq p}$ is drawn from the GOE ensemble, that is a symmetric random matrix whose upper triangular entries are independent zero mean Gaussian random variables with $\mathbb{E}W_{ij}^2=1$ for $1\leq i<j\leq p$ and $\mathbb{E}W_{ii}^2=2$ for $i=1,\dots,p$. Alternatively, this model can be defined on $\mathcal{X}=\mathbb{R}^{p(p+1)/2}$ with 
\begin{equation}\label{eq_stat_experiment:low:rank}
(\mathbb{P}_U)_{U\in O(p)},\qquad\mathbb{P}_U=\mathcal{N}(\operatorname{vech}(U\Lambda U^T), \sigma^2\Sigma_W),
\end{equation} 
where symmetric matrices $a\in\mathbb{R}^{p\times p}$ are transformed
into vectors using $\operatorname{vech}(a)=(a_{11},a_{21},\dots,a_{p1},a_{22},a_{32}\dots,a_{p2},\dots,a_{pp})\in \mathbb{R}^{p(p+1)/2}$, and $\Sigma_W$ is the covariance matrix of $\operatorname{vech}(W)$. The following theorem is the analogue of Theorem \ref{theorem1}.

\begin{theorem}\label{theorem3} Consider the statistical model \eqref{eq_stat_experiment:low:rank}. Then for each $\delta>0$, we have 
\begin{align*}
\inf_{\hat P}\int_{O(p)} \mathbb{E}_U\|\hat{P}-P_{\leq d}(U)\|_2^2\,dU\geq I'_\delta
\end{align*}
with infimum taken over all $\mathbb{R}^{p\times p}$-valued estimators $\hat P=\hat P(X_1,\dots,X_n)$ and
\begin{align*}
 I'_\delta = \frac{1}{1+2\delta}\max\Big\{\sum_{i\leq d}\limits\sum_{j>d}\limits x_{ij}\ :\ & 0\leq x_{ij}\leq \tfrac{2\sigma^2}{(\lambda_i-\lambda_j)^2}\quad \text{for all }i\leq d,j>d,\\
                          & \sum_{i\leq d}\limits x_{ij}\leq \delta\quad \text{for all }j>d, \\
                         & \sum_{j> d}\limits x_{ij}\leq \delta\quad \text{for all }i\leq d\Big\}.
\end{align*}
\end{theorem}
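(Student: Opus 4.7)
The plan is to follow the proof of Theorem~\ref{theorem1} almost verbatim, changing only the Fisher-information computation to reflect the additive GOE noise. Concretely, take $A$ to be the space of symmetric $p\times p$ matrices endowed with the Hilbert--Schmidt inner product $\langle a,b\rangle=\operatorname{tr}(ab)$ and with the $O(p)$-action $V\cdot a=VaV^T$. Equivariance (A1) of the model \eqref{eq_stat_experiment:low:rank} reduces to the conjugation invariance $VWV^T\stackrel{d}{=}W$ of the GOE; the equivariance (A2) of $\psi(U)=UP_{\leq d}(I_p)U^T$ and the inner-product invariance (A3) are immediate. Taking $w=(1,\dots,1)$ identifies $l_w$ with $\|\cdot\|_2^2$, and the $v_k$ can be chosen as $v_k(U)=UB_kU^T$ for any fixed orthonormal basis $(B_k)$ of symmetric matrices, which makes (A4)--(A7) routine.

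The one genuinely new ingredient is the chi-squared computation. For each pair $(i,j)$ with $i\le d<j$ and a scalar $\alpha_{ij}\ge 0$, consider $\xi_{ij}=\alpha_{ij}(E_{ij}-E_{ji})\in\mathfrak{so}(p)$. The commutator identity $[\xi_{ij},\Lambda]=\alpha_{ij}(\lambda_j-\lambda_i)(E_{ij}+E_{ji})$ shows that $\mathbb{P}_{\exp(t\xi_{ij})}$ and $\mathbb{P}_{I_p}$ are Gaussians with common covariance $\sigma^2\Sigma_W$ whose means in the vech parametrisation differ, to leading order in $t$, by a vector supported at the single coordinate corresponding to the $(j,i)$-entry, with magnitude $t\alpha_{ij}(\lambda_j-\lambda_i)$. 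Since that off-diagonal entry of $W$ has unit variance, the standard formula $\chi^2(\mathcal{N}(\mu_1,\Sigma),\mathcal{N}(\mu_2,\Sigma))=\exp((\mu_1-\mu_2)^T\Sigma^{-1}(\mu_1-\mu_2))-1$ yields
\[
a_{ij}^{-1}=\lim_{t\to 0}\frac{\chi^2(\mathbb{P}_{\exp(t\xi_{ij})},\mathbb{P}_{I_p})}{t^2}=\frac{\alpha_{ij}^2(\lambda_i-\lambda_j)^2}{\sigma^2},
\]
replacing the Wishart expression proportional to $n\alpha_{ij}^2(\lambda_i-\lambda_j)^2/(\lambda_i\lambda_j)$ that is used for Theorem~\ref{theorem1}.

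With this substitution in hand, Proposition~\ref{main:lower:bound} applied to the family $\{\xi_{ij}\}$ together with the basis elements $v_{ij}(I_p)=(E_{ij}+E_{ji})/\sqrt{2}$, the identity $d\psi(I_p)\xi_{ij}=-\alpha_{ij}(E_{ij}+E_{ji})$, and the same bookkeeping on the cross-terms $\langle v_k(I_p),dv_{j}(I_p)\xi_j\rangle$ as in the proof of Theorem~\ref{theorem1} produces, after reparametrising by $x_{ij}=2\alpha_{ij}^2\sigma^2/(\lambda_i-\lambda_j)^2$, exactly the variational problem defining $I'_\delta$. I expect the principal obstacle to be not any single computation but rather the clean reuse of the algebraic bookkeeping from Theorem~\ref{theorem1} that produces both the prefactor $\frac{1}{1+2\delta}$ and the row/column substochasticity constraints; since those manipulations depend only on $\psi$ and the $v_k$ and not on the sampling distribution, they should transfer verbatim, yielding the claim.
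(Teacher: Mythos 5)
Your proposal follows the same route as the paper: apply Proposition~\ref{main:lower:bound} (via Corollary~\ref{cor:lower:bound}) with $\psi=P_{\leq d}$, verify (A1) from conjugation-invariance of the GOE, feed in the Fisher information $a_{ij}^{-1}=(\lambda_i-\lambda_j)^2/\sigma^2$ for the directions $L^{(ij)}$, and close with Lemma~\ref{lem:max:problem}. Your derivation of that Fisher information via the commutator $[\xi,\Lambda]$ and the exact $\chi^2$ formula for Gaussian shift families is correct and is the computation the paper delegates to an external lemma, so your argument is in fact more self-contained on that point.

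One small caution: you propose to set $A$ equal to the symmetric matrices with the orthonormal basis $v_{ij}(I_p)=(E_{ij}+E_{ji})/\sqrt{2}$, whereas the paper keeps $A=\mathbb{R}^{p\times p}$ with $v_{kl}(U)=u_ku_l^T$ exactly as in Section~\ref{sec:proof:appli:specialization}, so that Corollary~\ref{cor:lower:bound} and Lemma~\ref{lem:max:problem} apply without re-deriving them. Your symmetric choice does work (for the Hilbert--Schmidt loss one may symmetrize any estimator without increasing the risk, so the infimum over $\mathbb{R}^{p\times p}$-valued estimators is unchanged), but then the bookkeeping in Corollary~\ref{cor:lower:bound} does not transfer ``verbatim'' as you assert --- the normalizations in the numerator and in the diagonal contributions to the denominator change by factors of $\sqrt{2}$ and $2$ that must be tracked to arrive at the stated $I'_\delta$. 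Reusing the paper's $\mathbb{R}^{p\times p}$ setup avoids this extra work entirely.
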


\begin{example}
Suppose that $\operatorname{rank}(\Lambda)=d\leq p-d$. Then, setting 
\begin{align*}
x_{ij}=\min\Big(\frac{\sigma^2}{\lambda_i^2},\frac{1}{p-d}\Big),\quad\text{we get}\quad I_1'\geq \frac{1}{3}\sum_{i\leq d}\min\Big(\frac{\sigma^2(p-d)}{\lambda_i^2}, 1\Big).
\end{align*}
Ignoring the minimum with $1$, $I_1'$ gives the size of a first order perturbation expansion for $X$ (see \cite{MR4052188} for more details and a corresponding upper bound).
\end{example}

\section{Proofs for Section \ref{sec:applications}}\label{sec:proof:appli}

In this section we show how Theorems \ref{theorem1}--\ref{theorem3} can be obtained by an application of Proposition \ref{main:lower:bound}. 

\subsection{Specialization to principal subspaces}\label{sec:proof:appli:specialization}

We start with specializing Proposition \ref{main:lower:bound} in the case where
\begin{align*}
\psi(U)=P_{\leq d}(U)=\sum_{i\leq d}u_iu_i^T=U\sum_{i\leq d}e_ie_i^TU^T,\qquad U\in O(p),
\end{align*}
where $1\leq d\leq p$ is a natural number. Here $u_i=Ue_i$ is the $i$-th column of $U$ and $e_1,\dots,e_p$ denotes the standard basis in $\mathbb{R}^p$. We consider $A=\mathbb{R}^{p\times p}$ endowed with the trace inner product $\langle a,b\rangle_2 =\operatorname{tr}(a^Tb)$, $a,b\in\mathbb{R}^{p\times p}$ and choose
\begin{align*}
v:O(p)\rightarrow \mathbb{R}^{p\times p},\qquad v(U)=(u_ku_l^T)_{1\leq k,l\leq p}.
\end{align*}
Hence, for $w\in\mathbb{R}_{>0}^{p\times p}$, we consider the loss function defined by
\begin{align*}
 l_w(U,a)=\sum_{k=1}^p\sum_{l=1}^p w_{kl}\langle u_ku_l^T, a-P_{\leq d}(U)\rangle_2^2.
\end{align*}
In particular, if $w_{kl}=1$ for all $k,l$, then $l_w(U,a)=\|a-P_{\leq d}(U)\|_2^2=\langle a-P_{\leq d}(U),a-P_{\leq d}(U)\rangle_2$ is the squared Hilbert-Schmidt (or Frobenius) distance. Note that, in contrast to Section \ref{sec:van:trees:inequality}, we consider a double index in this section. We equip $A$ with the group action given by conjugation $U\cdot a=UaU^T$, $a\in\mathbb{R}^{p\times p}$, $U\in O(p)$. Using this definition it is easy to see that (A2), (A3), (A4) and (A5) are satisfied. Moreover, the following lemma computes the derivatives in (A6) and (A7) in this case.

\begin{lemma}\label{lem:derivative}
For $\xi\in\mathfrak{so}(p)$, we have
 \begin{description}
 \item[(i)] $dP_{\leq d}(I_p)\xi=\xi\sum_{i\leq d}e_ie_i^T-\sum_{i\leq d}e_ie_i^T\xi$,
 \item[(ii)] $dv_{ij}(I_p)\xi=\xi e_ie_j^T-e_ie_j^T\xi$.
 \end{description}
In particular, for $i\neq j$ and $L^{(ij)}=e_ie_j^T-e_je_i^T\in\mathfrak{so}(p)$, we have
  \begin{description}
 \item[(i)] $dP_{\leq d}(I_p)L^{(ij)}=-dP_{\leq d}(I_p)L^{(ji)}=-e_ie_j^T-e_je_i^T$ if $i\leq d$ and $j>d$,
 \item[(ii)] $dv_{ij}(I_p)L^{(ij)}=e_ie_i^T-e_je_j^T$.
 \end{description}
\end{lemma}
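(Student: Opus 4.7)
The plan is to view $P_{\leq d}$ and the $v_{ij}$ as smooth maps on $O(p)$ that are simply conjugations of fixed matrices, compute their first-order Taylor expansions at $I_p$, and then specialize the resulting commutator formulas to the basis elements $L^{(ij)}$ of $\mathfrak{so}(p)$.

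First I would rewrite the two maps in a uniformly conjugational form: setting $P:=\sum_{i\leq d}e_ie_i^T$ and $E^{(ij)}:=e_ie_j^T$, we have $P_{\leq d}(U)=UPU^T$ and $v_{ij}(U)=UE^{(ij)}U^T$ for every $U\in O(p)$. Substituting $\exp(t\xi)=I_p+t\xi+O(t^2)$ and using $\xi^T=-\xi$ for $\xi\in\mathfrak{so}(p)$, both maps expand as
$$
P_{\leq d}(\exp(t\xi)) = P + t(\xi P - P\xi) + O(t^2), \qquad v_{ij}(\exp(t\xi)) = E^{(ij)} + t(\xi E^{(ij)} - E^{(ij)}\xi) + O(t^2).
$$
Because $A=\mathbb{R}^{p\times p}$ is finite-dimensional, the $O(t^2)$ remainder is negligible in the (equivalent) operator and trace-inner-product norms, so the limits required by (A6) and (A7) hold in the strongest possible sense. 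This immediately proves assertions (i) and (ii) of the lemma with $dP_{\leq d}(I_p)\xi=\xi P-P\xi$ and $dv_{ij}(I_p)\xi=\xi E^{(ij)}-E^{(ij)}\xi$.

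For the second half, I would substitute $L^{(ij)}=e_ie_j^T-e_je_i^T$ (with $i\leq d<j$) into these commutator formulas and evaluate using the orthogonality relation $e_k^Te_l=\delta_{kl}$. The four rank-one products reduce to
$$
L^{(ij)}P=-e_je_i^T,\quad PL^{(ij)}=e_ie_j^T,\quad L^{(ij)}E^{(ij)}=-e_je_j^T,\quad E^{(ij)}L^{(ij)}=-e_ie_i^T,
$$
from which subtraction yields $dP_{\leq d}(I_p)L^{(ij)}=-e_ie_j^T-e_je_i^T$ and $dv_{ij}(I_p)L^{(ij)}=e_ie_i^T-e_je_j^T$. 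The equality $dP_{\leq d}(I_p)L^{(ij)}=-dP_{\leq d}(I_p)L^{(ji)}$ then follows from the linearity of $\xi\mapsto\xi P-P\xi$ combined with $L^{(ji)}=-L^{(ij)}$.

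No serious obstacle is expected. The computation rests only on the two elementary facts that $\exp(t\xi)=I_p+t\xi+O(t^2)$ and that $\xi$ is skew-symmetric; the only care needed is index bookkeeping when multiplying the rank-one matrices by $P$ or by $L^{(ij)}$, where the assumptions $i\leq d$ and $j>d$ determine which Kronecker deltas vanish. In particular, no deeper Lie-group machinery is invoked beyond what has already been recalled in Section~\ref{sec:van:trees:inequality}.
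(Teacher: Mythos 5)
Your proof is correct and essentially the same as the paper's: both differentiate the conjugation action $U\mapsto UMU^T$ at $I_p$ using $(d/dt)\exp(t\xi)=\xi\exp(t\xi)$ (equivalently, $\exp(t\xi)=I_p+t\xi+O(t^2)$) and skew-symmetry of $\xi$, arriving at the commutator $\xi M-M\xi$ before specializing $\xi=L^{(ij)}$. The only cosmetic difference is that you package $P_{\leq d}$ and $v_{ij}$ uniformly as conjugations of fixed matrices, whereas the paper writes $P_{\leq d}(\exp(t\xi))=\sum_{i\leq d}(\exp(t\xi)e_i)(\exp(t\xi)e_i)^T$ and applies the product rule directly.
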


\begin{remark}
We have $dP_{\leq d}(I_p)L^{(kl)}=0$ if $k,l\leq d$ or $k,l>d$.
\end{remark}

\begin{proof}
By definition, for $U\in SO(p)$ and $\xi\in \mathfrak{so}(p)$, we have $dP_{\leq d}(I_p)\xi=f'(0)$ with $f:\mathbb{R}\rightarrow \mathbb{R}^{p\times p},t\mapsto \sum_{i\leq d}(\exp(t\xi)e_i)(\exp(t\xi)e_i)^T$. Hence, using $(d/dt)\exp(t\xi)=\xi\exp(t\xi)$, (i) follows. Claim (ii) can be shown analogously and (iii) and (iv) follow from inserting $\xi=L^{(ij)}$ into (i) and (ii), respectively.
\end{proof}

\begin{corollary}\label{cor:lower:bound} Consider the above setting with $\psi=P_{\leq d}$. Suppose that (A1) holds and that there is a bilinear form $\mathcal{I}:\mathfrak{so}(p)\times \mathfrak{so}(p)\rightarrow \mathbb{R}$ such that
\begin{align}\label{eq:Fisher:info}
\lim_{t\rightarrow 0}\frac{1}{t^2}\chi^2(\mathbb{P}_{\exp(t\xi)},\mathbb{P}_{I_p})=\mathcal{I}(\xi,\xi)\qquad \text{for all }\xi\in\mathfrak{so}(p).
\end{align}
Let $I\subseteq \{1,\dots,d\}$ and $J\subseteq \{d+1,\dots,p\}$. Then, for every estimator $\hat P=\hat P(X)$ with values in $A=\mathbb{R}^{p\times p}$ and every $z_{ij}\in\mathbb{R}$, $i\in I$, $j\in J$, we have
\begin{align}
&\int_{O(p)}\mathbb{E}_Ul_w(U,\hat{P}(X))\,dU\label{eq:cor:lower:bound}\\
&\geq \frac{\Big(\sum_{i\in I}\limits\sum_{j\in J}\limits z_{ij}\Big)^2}{\sum_{i\in I}\limits\sum_{j\in J}\limits((w_{ij}+w_{ji})a_{ij})^{-1}z_{ij}^2+\sum_{i\in I}\limits w_{ii}^{-1}\Big(\sum_{j\in J}\limits z_{ij}\Big)^2+\sum_{j\in J}\limits w_{jj}^{-1}\Big(\sum_{i\in I}\limits z_{ij}\Big)^2},\nonumber
\end{align} 
where $a_{ij}^{-1}=\mathcal{I}(L^{(ij)},L^{(ij)})$ and $L^{(ij)}=e_ie_j^T-e_je_i^T$, $i\in I,j\in J$.
\end{corollary}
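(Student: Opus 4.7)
The plan is to apply Proposition \ref{main:lower:bound} after re-indexing its $m = p^2$ directions by ordered pairs $(k,l)$, so that $v_{kl}(I_p) = e_k e_l^T$ becomes the standard orthonormal basis of $A = \mathbb{R}^{p \times p}$ with doubly-indexed weights $w_{kl}$. Assumptions (A1)--(A5) are immediate for the conjugation action $U \cdot a = U a U^T$, and (A6)--(A7) are exactly the content of Lemma \ref{lem:derivative}. The key design choice will be, for each $(i,j) \in I \times J$, to split the action of $L^{(ij)} \in \mathfrak{so}(p)$ across both index positions:
\begin{equation*}
\xi_{ij} := \alpha_{ij} L^{(ij)}, \qquad \xi_{ji} := \beta_{ij} L^{(ij)},
\end{equation*}
with all remaining $\xi_{kl} := 0$, leaving $\alpha_{ij}, \beta_{ij} \in \mathbb{R}$ as free parameters to be optimized at the end.

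Given this ansatz, the three ingredients of Proposition \ref{main:lower:bound} each reduce to a short calculation via Lemma \ref{lem:derivative}. Part (iii) together with the orthonormality of the $e_k e_l^T$ collapses the numerator to $\sum_{(i,j) \in I \times J}(-\alpha_{ij} - \beta_{ij})$, so that imposing $-\alpha_{ij} - \beta_{ij} = z_{ij}$ delivers the desired $(\sum_{(i,j)} z_{ij})^2$. Bilinearity of $\mathcal{I}$ in \eqref{eq:Fisher:info} then turns the first denominator term into $\sum_{(i,j) \in I \times J} (w_{ij}^{-1}\alpha_{ij}^2 + w_{ji}^{-1}\beta_{ij}^2)\, a_{ij}^{-1}$. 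For the second denominator term, part (ii) of the lemma gives $dv_{ij}(I_p) L^{(ij)} = dv_{ji}(I_p) L^{(ij)} = e_i e_i^T - e_j e_j^T$, so the combined $(i,j)$-contribution $-z_{ij}(e_i e_i^T - e_j e_j^T)$ only hits the diagonal basis vectors $e_k e_k^T$ with $k \in I \cup J$; after squaring and summing, this term becomes exactly $\sum_{i \in I} w_{ii}^{-1}(\sum_{j \in J} z_{ij})^2 + \sum_{j \in J} w_{jj}^{-1}(\sum_{i \in I} z_{ij})^2$.

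The final step will be to minimize the Fisher sum subject to the constraint $\alpha_{ij} + \beta_{ij} = -z_{ij}$; this decouples into one-dimensional quadratic problems whose minima equal $z_{ij}^2/(w_{ij}+w_{ji})$, attained at $\alpha_{ij} = -z_{ij}w_{ij}/(w_{ij}+w_{ji})$. Substituting the three resulting expressions into the bound of Proposition \ref{main:lower:bound} yields \eqref{eq:cor:lower:bound}. The main subtlety, in my view, lies in choosing to treat $\xi_{ij}$ and $\xi_{ji}$ as independent perturbations: had I used only $\xi_{ij}$, the first denominator term would carry $w_{ij}^{-1}$ rather than $(w_{ij}+w_{ji})^{-1}$, and the lower bound would lose the symmetrized weight that is ultimately responsible for the factor of $2$ in the constraints $x_{ij} \leq 2\lambda_i\lambda_j/(n(\lambda_i-\lambda_j)^2)$ of Theorems \ref{theorem1} and \ref{theorem3}.
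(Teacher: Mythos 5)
Your proof is correct and follows essentially the same route as the paper's: perturb in both the $(i,j)$ and $(j,i)$ index positions along the same Lie-algebra direction $L^{(ij)}$, compute the three quantities in Proposition~\ref{main:lower:bound} via Lemma~\ref{lem:derivative}, and decouple the constrained minimization over $\alpha_{ij}+\beta_{ij}=-z_{ij}$. (Minor slip: the decoupled one-dimensional minima should read $z_{ij}^2/((w_{ij}+w_{ji})a_{ij})$, carrying the $a_{ij}^{-1}$ factor you correctly kept in the preceding display.)
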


\begin{remark}\label{remark:cor:lower:bound}
The assumption that $w$ has strictly positive entries can be relaxed. To see this, let $\hat P=\hat P(X)$ be an estimator with $\int_{O(p)}\mathbb{E}_U\|\hat{P}(X)\|_2^2\,dU<\infty$, $w\in\mathbb{R}_{\geq 0}^{p\times p}$, and $I\subseteq \{1,\dots,d\}$ and $J\subseteq \{d+1,\dots,p\}$ such that $w_{ij},w_{ji},w_{ii},w_{jj}>0$ for all $i\in I$, $j\in J$. Then both sides of \eqref{eq:cor:lower:bound} are continuous at $w$ (apply the dominated convergence theorem to the left-hand side). Hence, by a limiting argument, \eqref{eq:cor:lower:bound} also hold under the latter conditions.
\end{remark}

\begin{proof}
We choose $\xi^{(ij)}=y_{ij}L^{(ij)}$ and $\xi^{(ji)}=-y_{ji}L^{(ji)}=y_{ji}L^{(ij)}$ for $i\in I$ and $j\in J$ and we set $\xi^{(lk)}=0$ in all other cases. Then, by Lemma \ref{lem:derivative}, the sum in the numerator of Proposition \ref{main:lower:bound} is equal to 
\begin{align*}
 &\sum_{i\in I}\limits\sum_{j\in J}\limits\langle v_{ij}(I_p),dP_{\leq d}(I_p)\xi^{(ij)}\rangle_2+ \sum_{j\in J}\limits\sum_{i\in I}\limits\langle v_{ji}(I_p),dP_{\leq d}(I_p)\xi^{(ji)}\rangle_2\\
 &=\sum_{i\in I}\limits\sum_{j\in J}\limits\big( y_{ij}\langle e_ie_j^T,-e_ie_j^T-e_je_i^T\rangle_2+y_{ji}\langle e_je_i^T,-e_ie_j^T-e_je_i^T\rangle_2\big)\\&=-\sum_{i\in I}\limits\sum_{j\in J}\limits (y_{ij}+y_{ji}).
\end{align*}
On the other hand, for $1\leq k,l\leq p$, the term in the squared brackets in the denominator is equal to
\begin{align*}
 \sum_{i\in I}\limits\sum_{j\in J}\limits&\langle v_{kl}(I_p),dv_{ij}(I_p)\xi^{(ij)}\rangle_2+ \sum_{j\in J}\limits\sum_{i\in I}\limits\langle v_{kl}(I_p),dv_{ji}(I_p)\xi^{(ji)}\rangle_2\\
 &=\sum_{i\in I}\limits\sum_{j\in J}\limits (y_{ij}\langle e_ke_l^T,e_ie_i^T-e_je_j^T\rangle_2+ y_{ji}\langle e_ke_l^T,e_ie_i^T-e_je_j^T\rangle_2)
\end{align*}
and the latter is equal to
\begin{align*}
\begin{cases}
 \sum_{j\in J}\limits (y_{kj}+y_{jk}), &k=l\in I,\\
 -\sum_{i\in I}\limits (y_{ik}+y_{ki}), &k=l\in J,\\
 0,& \text{else}.
\end{cases}
\end{align*}
Hence the second term in the denominator is equal to 
\begin{align*}
\sum_{i\in I}\limits w_{ii}^{-1}\Big(\sum_{j\in J}\limits (y_{ij}+y_{ji})\Big)^2+\sum_{j\in J}\limits w_{jj}^{-1}\Big(\sum_{i\in I}\limits (y_{ij}+y_{ji})\Big)^2
\end{align*}
Finally, the Fisher information term is equal to 
\begin{align*}
 \sum_{i\in I}\limits\sum_{j\in J}\limits \big(w_{ij}^{-1}\mathcal{I}(\xi^{(ij)},\xi^{(ij)})+  w_{ji}^{-1}\mathcal{I}(\xi^{(ji)},\xi^{(ji)})\big)=\sum_{i\in I}\limits\sum_{j\in J}\limits(w_{ij}^{-1}a_{ij}^{-1}y_{ij}^2+w_{ji}^{-1}a_{ij}^{-1}y_{ji}^2).
\end{align*}
Plugging all these formulas into Proposition \ref{main:lower:bound}, we get that, for every $y_{ij},y_{ji}\in\mathbb{R}$, $i\in I$, $j\in J$, the left-hand side in \eqref{eq:cor:lower:bound} is lower bounded by
\begin{align*}
\frac{\Big(\sum_{i\in I}\limits\sum_{j\in J}\limits (y_{ij}+y_{ji})\Big)^2}{\sum_{i\in I}\limits\sum_{j\in J}\limits(w_{ij}^{-1}a_{ij}^{-1}y_{ij}^2+w_{ji}^{-1}a_{ij}^{-1}y_{ji}^2)+\sum_{i\in I}\limits w_{ii}^{-1}\Big(\sum_{j\in J}\limits (y_{ij}+y_{ji})\Big)^2+\sum_{j\in J}\limits w_{jj}^{-1}\Big(\sum_{i\in I}\limits (y_{ij}+y_{ji})\Big)^2}.
\end{align*}
For $a,b\in(0,\infty]$ and $z\in\mathbb{R}$, it is easy to see that minimizing $a^{-1}x^2+b^{-1}y^2$ subject to $x+y=z$ leads to the value $(a+b)^{-1}z^2$. Applying this with $a=w_{ij}a_{ij}$, $b=w_{ji}a_{ij}$ and $z=z_{ij}$, the claim follows.
\end{proof}

\subsection{A simple optimization problem}\label{sec:proof:appli:optimization}

We now consider the optimization problem 
\begin{align}\label{eq:max:problem}
\max_{\substack{z_{ij}\in\mathbb{R}\\ i\in I,j\in J}}\frac{\Big(\sum_{i\in I}\limits\sum_{j\in J}\limits z_{ij}\Big)^2}{\sum_{i\in I}\limits\sum_{j\in J}\limits b_{ij}^{-1}z_{ij}^2+\sum_{i\in I}\limits w_{ii}^{-1}\Big(\sum_{j\in J}\limits z_{ij}\Big)^2+\sum_{j\in J}\limits w_{jj}^{-1}\Big(\sum_{i\in I}\limits z_{ij}\Big)^2},
\end{align}
where $w_{ii}$ and $w_{jj}$ are positive real numbers and $b_{ij}=(w_{ij}+w_{ji})a_{ij}\in (0,\infty]$, $i\in I$, $j\in J$. If $I$ or $J$ is a singleton, then a solution to \eqref{eq:max:problem} can be given explicitly.

\begin{lemma}\label{lem:max:problem:1}
Suppose that $I=\{1\}$ and that $w_{11}=w_{jj}=1$ for all $j\in J$. Then a solution of \eqref{eq:max:problem} is given by $z_{1j}=(1+b_{1j}^{-1})^{-1}$ with $b_{1j}=2a_{1j}$, leading to the maximum
\begin{align}\label{eq:max:problem:1}
 \frac{\sum_{j\in J}\limits(1+b_{1j}^{-1})^{-1}}{1+\sum_{j\in J}\limits(1+b_{1j}^{-1})^{-1}}\geq \frac{1}{4}\min\Big(\sum_{j\in J}\min(b_{1j},1), 1\Big)= \frac{1}{4}\min\Big(\sum_{j\in J}b_{1j}, 1\Big).
\end{align}
\end{lemma}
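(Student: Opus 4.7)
The plan is to reduce \eqref{eq:max:problem} to a one-dimensional optimization by exploiting its scale-invariance, solve it explicitly via Cauchy--Schwarz to obtain the closed-form maximum $T/(T+1)$, and then bound this quantity from below using a pointwise estimate on $b\mapsto b/(1+b)$ together with a short case analysis.

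Under the specializations $I=\{1\}$ and $w_{11}=w_{jj}=1$, I would write $z_j:=z_{1j}$, $b_j:=b_{1j}$ and $S:=\sum_{j\in J}z_j$, so that \eqref{eq:max:problem} collapses to
\[
\sup_{(z_j)_{j\in J}}\frac{S^2}{S^2+\sum_{j\in J}(1+b_j^{-1})\,z_j^2}.
\]
Since the ratio is invariant under $z_j\mapsto\alpha z_j$, I would fix a nonzero value of $S$ and minimize the positive-definite quadratic $\sum_{j}(1+b_j^{-1})z_j^2$ subject to the linear constraint $\sum_j z_j=S$. Cauchy--Schwarz then gives $S^2\leq T\cdot\sum_j(1+b_j^{-1})z_j^2$ with $T:=\sum_{j\in J}(1+b_j^{-1})^{-1}$, with equality precisely when $z_j\propto (1+b_j^{-1})^{-1}$. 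This identifies the optimizer $z_{1j}=(1+b_{1j}^{-1})^{-1}$ (up to the irrelevant overall scale) and yields the maximum value $T/(T+1)$, which is the leading expression in \eqref{eq:max:problem:1}.

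For the stated lower bound, I would use the elementary estimate $b/(1+b)\geq \tfrac12\min(b,1)$, valid for all $b\geq 0$ and proved by splitting on whether $b\leq 1$ or $b\geq 1$. Summing over $j$ gives $T\geq T'/2$, where $T':=\sum_{j\in J}\min(b_j,1)$, and monotonicity of $x\mapsto x/(x+1)$ then yields $T/(T+1)\geq T'/(T'+2)$. A final case split on whether $T'\leq 1$ or $T'\geq 1$ produces $T/(T+1)\geq \tfrac14\min(T',1)$: in the first case $T'/(T'+2)\geq T'/4$ since $T'\leq 2$; in the second, $T\geq 1/2$ gives $T/(T+1)\geq 1/3\geq 1/4$. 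The concluding identity $\min(\sum_j\min(b_j,1),1)=\min(\sum_j b_j,1)$ follows from a short case check on whether some $b_j$ exceeds $1$ (if so, both sides equal $1$; otherwise each $\min(b_j,1)=b_j$).

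No substantive obstacle is anticipated; all steps reduce to Cauchy--Schwarz plus elementary case analysis. The only mild care required is to match the constants in the two-sided bounds so that both branches of the final case split yield at least the factor $1/4$.
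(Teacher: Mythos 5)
Your proof is correct and follows essentially the same route as the paper: Cauchy--Schwarz yields the exact maximum $T/(T+1)$ with the stated optimizer, and the pointwise bound $x/(1+x)\geq\tfrac12\min(x,1)$ is the same key estimate used in the paper (the paper applies it twice, once to each $b_{1j}$ and once to $T$, whereas you apply it once to the $b_{1j}$ and finish by monotonicity plus a case split on $T'$ --- a cosmetic difference leading to the same constant $\tfrac14$).
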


\begin{proof}
Obviously, the values for $z_{1j}$ given in Lemma \ref{lem:max:problem:1} lead to the expression on the left-hand side of \eqref{eq:max:problem:1}. The value defined through \eqref{eq:max:problem} is also upper bounded by the left-hand side of \eqref{eq:max:problem:1}, as can be seen by inserting the (Cauchy-Schwarz) inequality $(\sum_{j\in J}(1+b_{1j}^{-1})^{-1})^{-1}(\sum_{j\in J}z_{1j})^2\leq \sum_{j\in J}(1+b_{1j}^{-1})z_{1j}^2$ into \eqref{eq:max:problem}. Finally, the inequality in \eqref{eq:max:problem:1} follows from inserting $x/(1+x)\geq (1/2)\min(x, 1)$, $x\geq 0$.
\end{proof}

In general, it seems more difficult to give an explicit formula for \eqref{eq:max:problem} using only $b_{ij}$ and $\wedge$. Yet, the following lower bound is sufficient for our purposes. In the special case of Lemma \ref{lem:max:problem:1}, it gives the second bound in \eqref{eq:max:problem:1}.

\begin{lemma}\label{lem:max:problem}
For each $\delta>0$, the value defined through \eqref{eq:max:problem} is lower bounded by
\begin{align}
\text{maximize}\quad & \frac{1}{1+2\delta}\sum_{i\in I}\sum_{j\in J}x_{ij}&\nonumber\\
\text{subject to}\quad  & 0\leq x_{ij}\leq b_{ij}\qquad\ \ \   \text{for all}\quad i\in I,j\in J,\label{eq:constraints:doubly}\\
& \sum_{i\in I}\limits x_{ij}\leq \delta w_{jj}\qquad  \text{for all}\quad j\in J, \nonumber\\
& \sum_{j\in J}\limits x_{ij}\leq \delta w_{ii}\qquad\   \text{for all}\quad i\in I.\nonumber 
\end{align}
\end{lemma}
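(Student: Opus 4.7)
The natural strategy is to exhibit, for any feasible point $(x_{ij})$ of the linear program on the right-hand side, a choice of $(z_{ij})$ that realizes the claimed lower bound in the variational expression \eqref{eq:max:problem}. The guess $z_{ij}=x_{ij}$ turns out to work directly, so the proof is essentially a one-line substitution followed by bounding each term in the denominator against the constraints.

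More concretely, with $z_{ij}=x_{ij}$ the numerator of \eqref{eq:max:problem} is $\bigl(\sum_{i\in I}\sum_{j\in J} x_{ij}\bigr)^2$. For the denominator I will bound the three summands separately. From $0\leq x_{ij}\leq b_{ij}$ I get $b_{ij}^{-1} x_{ij}^2 \leq x_{ij}$, hence $\sum_{i,j} b_{ij}^{-1} x_{ij}^2 \leq \sum_{i,j} x_{ij}$. From the row constraint $\sum_{j\in J} x_{ij} \leq \delta w_{ii}$ I get
\[
w_{ii}^{-1}\Bigl(\sum_{j\in J} x_{ij}\Bigr)^2 \leq \delta \sum_{j\in J} x_{ij},
\]
and summing over $i\in I$ yields $\sum_{i\in I} w_{ii}^{-1}\bigl(\sum_j x_{ij}\bigr)^2 \leq \delta \sum_{i,j} x_{ij}$. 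Symmetrically, the column constraint gives the same bound for the last term. Adding the three contributions bounds the denominator by $(1+2\delta)\sum_{i,j} x_{ij}$, so the ratio is at least $\bigl(\sum_{i,j} x_{ij}\bigr)/(1+2\delta)$, which matches the objective of the linear program.

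Since this holds for every feasible $(x_{ij})$, taking the supremum over such $(x_{ij})$ yields the stated lower bound. I do not anticipate any real obstacle: the only substantive observation is that the three denominator terms are each controlled by a different defining constraint of the linear program (the box constraint $x_{ij}\leq b_{ij}$ and the two substochasticity-type constraints on row and column sums), which is precisely why the constraints in \eqref{eq:constraints:doubly} are shaped the way they are. No limiting or measurability issue arises because $(x_{ij})$ ranges over a bounded set of real vectors.
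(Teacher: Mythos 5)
Your proof is correct and follows essentially the same route as the paper's: substitute $z_{ij}=x_{ij}$ and bound the denominator by $(1+2\delta)\sum_{i,j}x_{ij}$ using the box and row/column constraints. The paper states the denominator bound in a single line without separating the three terms, but the underlying estimates are precisely the ones you spell out.
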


\begin{proof}
Let $z_{ij}=x_{ij}$ be real values satisfying the constraints in \eqref{eq:constraints:doubly}. Then we have
\begin{align*}
\sum_{i\in I}\limits\sum_{j\in J}\limits b_{ij}^{-1}z_{ij}^2+\sum_{i\in I}\limits w_{ii}^{-1}\Big(\sum_{j\in J}\limits z_{ij}\Big)^2+\sum_{j\in J}\limits w_{jj}^{-1}\Big(\sum_{i\in I}\limits z_{ij}\Big)^2\leq (1+2\delta)\sum_{i\in I}\sum_{j\in J}z_{ij}.
\end{align*}
Inserting this into \eqref{eq:max:problem}, the claim follows
\end{proof}

\begin{remark}
If $b_{ij}=\infty$, then the first constraint in \eqref{eq:constraints:doubly} can be written as $0\leq x_{ij}<\infty$.
\end{remark}

\subsection{End of proofs of the consequences}\label{proof:theorem}

\begin{proof}[Proof of  Theorem \ref{theorem1}]
By \cite[Lemma 1]{W20}, Condition \eqref{eq:Fisher:info} is satisfied with 
\begin{align*}
\mathcal{I}(\xi,\xi)=\frac{n}{2}\sum_{i,j=1}^p\xi_{ij}^2\frac{(\lambda_i-\lambda_j)^2}{\lambda_i\lambda_j},\qquad\xi\in\mathfrak{so}(p).
\end{align*}
Moreover, letting $O(p)$ act coordinate-wise on $\prod_{i=1}^n\mathbb{R}^p$, the statistical model in \eqref{eq_stat_experiment} satisfies (A1). Hence, applying Corollary~\ref{cor:lower:bound} with $I=\{1,\dots,d\}$ and $J=\{d+1,\dots,p\}$, $w_{kl}=1$ for all $k,l$ (leading to the Hilbert-Schmidt distance, cf.~\eqref{eq:choice:w1:HS:norm}), the claim follows from Lemma \ref{lem:max:problem}, using that $b_{ij}=2a_{ij}=2\mathcal{I}(L^{(ij)},L^{(ij)})^{-1}=2\lambda_i\lambda_j/(n(\lambda_i-\lambda_j)^2)\in (0,\infty]$.
\end{proof}

\begin{proof}[Proof of Theorem \ref{theorem2}]
The main remaining point is to show that the excess risk $\mathcal{E}_U(\hat P)$, $\hat P\in\mathcal{P}_d$, is of the form $l_w(U,\hat P)$ for some $w\in\mathbb{R}_{\geq 0}^{p\times p}$. This can be deduced from \cite[Lemma 2.6]{MR4102689}.

\begin{lemma}\label{lem:repr:excess:risk}
For $\hat P\in\mathcal{P}_d$ and $\mu\in[\lambda_{d+1},\lambda_d]$, we have
\begin{align*}
\mathcal{E}_U(\hat P)=\sum_{k=1}^p\sum_{l=1}^p w_{kl}\langle u_ku_l^T,\hat P-P_{\leq d}(U)\rangle^2_2=l_w(U,\hat P)
\end{align*}
with $w_{kl}=\lambda_k-\mu$ for $k\leq d$ and $w_{kl}=\mu-\lambda_k$ for $k>d$.
\end{lemma}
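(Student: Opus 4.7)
The plan is to recast the reconstruction error in the eigenbasis of $\Sigma:=U\Lambda U^T$ and reduce the lemma to two Parseval expansions, one for directions in the true subspace and one for directions in its orthogonal complement. Since both $\hat P$ and $P_{\leq d}(U)$ are orthogonal projections, using $(I-P)^2=I-P$ we have
\begin{equation*}
R_U(P)=\mathbb{E}_U\|(I-P)X\|^2=\operatorname{tr}((I-P)\Sigma)=\operatorname{tr}(\Sigma)-\operatorname{tr}(\Sigma P),
\end{equation*}
so $\mathcal{E}_U(\hat P)=\operatorname{tr}\bigl(\Sigma(P_{\leq d}(U)-\hat P)\bigr)$. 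As $\operatorname{tr}(\hat P)=\operatorname{tr}(P_{\leq d}(U))=d$, I may subtract $\mu I$ from $\Sigma$ without changing the trace and expand $\Sigma-\mu I=\sum_k(\lambda_k-\mu)u_ku_k^T$, obtaining
\begin{equation*}
\mathcal{E}_U(\hat P)=\sum_{k\leq d}(\lambda_k-\mu)\,u_k^T(I-\hat P)u_k+\sum_{k>d}(\mu-\lambda_k)\,u_k^T\hat P u_k.
\end{equation*}
The hypothesis $\mu\in[\lambda_{d+1},\lambda_d]$ makes all coefficients nonnegative, which is essential for the final form of $l_w$.

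The second step rewrites each diagonal entry as a squared norm. For $k\leq d$ one has $P_{\leq d}(U)u_k=u_k$, hence $(I-\hat P)u_k=-(\hat P-P_{\leq d}(U))u_k$, so by Parseval in the basis $(u_l)_{l=1}^p$,
\begin{equation*}
u_k^T(I-\hat P)u_k=\|(I-\hat P)u_k\|^2=\sum_{l=1}^p\langle u_lu_k^T,\hat P-P_{\leq d}(U)\rangle_2^2.
\end{equation*}
Symmetrically, for $k>d$ one has $P_{\leq d}(U)u_k=0$, so $\hat Pu_k=(\hat P-P_{\leq d}(U))u_k$ and the identity $u_k^T\hat Pu_k=\|\hat Pu_k\|^2$ gives the same Parseval expansion. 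Combining the two cases yields
\begin{equation*}
\mathcal{E}_U(\hat P)=\sum_{k=1}^p\sum_{l=1}^p w_k\,\langle u_lu_k^T,\hat P-P_{\leq d}(U)\rangle_2^2,
\end{equation*}
with $w_k=\lambda_k-\mu$ if $k\leq d$ and $w_k=\mu-\lambda_k$ if $k>d$.

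To match the stated form exactly I would finally use that $\hat P-P_{\leq d}(U)$ is symmetric (difference of two orthogonal projections), so $\langle u_lu_k^T,\hat P-P_{\leq d}(U)\rangle_2=\langle u_ku_l^T,\hat P-P_{\leq d}(U)\rangle_2$; relabeling $(k,l)\leftrightarrow(l,k)$ in the outer sum then gives $w_{kl}=w_k$ as required. No step presents a serious obstacle: the only delicate point is keeping track of which index plays the role of the "true" direction so that the correct projection identity ($P_{\leq d}u_k=u_k$ versus $P_{\leq d}u_k=0$) can be applied, and the rest is bookkeeping. The result is already essentially \cite[Lemma 2.6]{MR4102689}, so an alternative would be to simply quote it.
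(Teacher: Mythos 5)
Your proof is correct. The paper's own argument simply cites \cite[Lemma~2.6]{MR4102689} to obtain the intermediate decomposition
$\mathcal{E}_U(\hat P)=\sum_{k\leq d}(\lambda_k-\mu)\|P_k(I-\hat P)\|_2^2+\sum_{k>d}(\mu-\lambda_k)\|P_k\hat P\|_2^2$
with $P_k=u_ku_k^T$, then passes to the $\langle u_ku_l^T,\cdot\rangle_2$ form via the matrix Parseval identity $\|P_ka\|_2^2=\sum_l\|P_kaP_l\|_2^2$ together with $\|P_kaP_l\|_2^2=\langle u_ku_l^T,a\rangle_2^2$. You instead rederive that intermediate decomposition from first principles: $R_U(P)=\operatorname{tr}((I-P)\Sigma)$ for a projection $P$, subtracting the trace-neutral $\mu I$ (using $\operatorname{tr}\hat P=\operatorname{tr}P_{\leq d}(U)=d$), expanding $\Sigma-\mu I$ in the eigenbasis, and then doing the Parseval step at the vector level via $\|(\hat P-P_{\leq d}(U))u_k\|^2=\sum_l\langle u_l,(\hat P-P_{\leq d}(U))u_k\rangle^2$. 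The two Parseval arguments are equivalent (for symmetric $a$ one has $\|P_k a\|_2^2=\|au_k\|^2$), so the content is the same; what your version buys is self-containment, avoiding the need to chase the citation. One small cleanup: the closing mention of ``relabeling $(k,l)\leftrightarrow(l,k)$'' is unnecessary and, if applied literally after the symmetry step, would turn the coefficient $w_k$ into $w_l$, which is not what is wanted. The symmetry $\langle u_lu_k^T,a\rangle_2=\langle u_ku_l^T,a\rangle_2$ for $a=\hat P-P_{\leq d}(U)$ symmetric already converts $\sum_{k,l}w_k\langle u_lu_k^T,a\rangle_2^2$ into $\sum_{k,l}w_k\langle u_ku_l^T,a\rangle_2^2$, which is precisely the stated form with $w_{kl}=w_k$; no further reindexing is needed.
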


\begin{proof}
For brevity we write $P_{\leq d}(U)=P_{\leq d}$ and $P_k=P_k(U)=u_ku_k^T$. By \cite[Lemma 2.6]{MR4102689}, we have
\begin{align*}
\mathcal{E}_U(\hat P)=\sum_{k\leq d}(\lambda_k-\mu)\|P_k(I-\hat P)\|_2^2+\sum_{k>d}(\mu-\lambda_k)\|P_k\hat P\|_2^2.
\end{align*}
Inserting 
\begin{align*}
\|P_k(I-\hat P)\|_2^2&=\|P_k(P_{\leq d}-\hat P)\|_2^2,\qquad k\leq d,\\
\|P_k\hat P\|_2^2&=\|P_k(\hat P-P_{\leq d})\|_2^2=\|P_k(P_{\leq d}-\hat P)\|_2^2,\qquad k>d,
\end{align*}
we obtain 
\begin{align*}
\mathcal{E}_U(\hat P)&=\sum_{k\leq d}(\lambda_k-\mu)\|P_k(P_{\leq d}-\hat P)\|_2^2+\sum_{k>d}(\mu-\lambda_k)\|P_k(P_{\leq d}-\hat P)\|_2^2\\
&=\sum_{k\leq d}\sum_{l=1}^p(\lambda_k-\mu)\|P_k(P_{\leq d}-\hat P)P_l\|_2^2+\sum_{k>d}\sum_{l=1}^p(\mu-\lambda_k)\|P_k(P_{\leq d}-\hat P)P_l\|_2^2,
\end{align*}
and the claim follows from inserting the identity $\|P_k aP_l\|_2^2=\langle u_ku_l^T,a\rangle_2^2$, $a\in \mathbb{R}^{p\times p}$.
\end{proof}
Applying Lemma \ref{lem:repr:excess:risk}, we get
\begin{align*}
\inf_{\hat{P}\in\mathcal{P}_d}\int_{O(p)}\mathbb{E}_U\mathcal{E}_U(\hat P)\,dU= \inf_{\hat{P}\in\mathcal{P}_d}\int_{O(p)}\mathbb{E}_Ul_w(U,\hat P)\,dU,
\end{align*}
where the infimum is over all estimators $\hat{P}$ with values in $\mathcal{P}_d\subseteq \mathbb{R}^{p\times p}$. Hence, applying Corollary \ref{cor:lower:bound} and Remark \ref{remark:cor:lower:bound} with $w=(w_{kl})$ from Lemma \ref{lem:repr:excess:risk}, $I=\{1,\dots,r\}$, $J=\{s,\dots,p\}$ and 
\begin{align*}
b_{ij}=(w_{ij}+w_{ji})a_{ij}=(\lambda_i-\mu+(\mu-\lambda_j))\mathcal{I}(L^{(ij)},L^{(ij)})^{-1}=\frac{1}{n}\frac{\lambda_i\lambda_j}{\lambda_i-\lambda_j}\in(0,\infty),
\end{align*}
the claim follows from Lemma \ref{lem:max:problem}.
\end{proof}

\begin{proof}[Proof of Theorem \ref{theorem3}]
By \cite[Lemma 3]{W20}, Condition \eqref{eq:Fisher:info} is satisfied with 
\begin{align*}
\mathcal{I}(\xi,\xi)=\frac{1}{2\sigma^{2}}\sum_{i=1}^p\sum_{j=1}^p\xi_{ij}^2(\lambda_i-\lambda_j)^2,\qquad\xi\in\mathfrak{so}(p).
\end{align*}
Moreover, let $O(p)$ act on $\mathbb{R}^{p\times p}$ by conjugation. Since $VWV^T\stackrel{d}{=}W$, $V\in O(p)$, we have $VXV^T\stackrel{d}{=}VUX(VU)^T+\sigma W$, meaning that the statistical model in \eqref{eq_stat_experiment:low:rank} satisfies (A1). Hence, applying Corollary \ref{cor:lower:bound} with $w_{kl}=1$, $I=\{1,\dots,d\}$ and $J=\{d+1,\dots,p\}$, the claim follows from Lemma \ref{lem:max:problem}, using that $b_{ij}=2a_{ij}=2\mathcal{I}(L^{(ij)},L^{(ij)})^{-1}=2\sigma^2/(\lambda_i-\lambda_j)^2\in (0,\infty]$.
\end{proof}

\begin{acknowledgement}
This research has been supported by a Feodor Lynen Fellowship of the Alexander von Humboldt Foundation. The author would like to thank the two anonymous referees for their helpful comments and remarks.
\end{acknowledgement}

\bibliographystyle{plain}
\bibliography{lit}

\begin{thebibliography}{10}

\bibitem{And}
T.~W. Anderson.
\newblock {\em An introduction to multivariate statistical analysis}.
\newblock John Wiley \& Sons, Inc., New York, second edition, 1984.

\bibitem{B97}
R.~Bhatia.
\newblock {\em Matrix analysis}.
\newblock Springer-Verlag, New York, 1997.

\bibitem{CLM20}
T.~T. Cai, H.~Li, and R.~Ma.
\newblock Optimal structured principal subspace estimation: Metric entropy and
  minimax rates.
\newblock Available at https://arxiv.org/abs/2002.07624.

\bibitem{MR3161458}
T.~T. Cai, Z.~Ma, and Y.~Wu.
\newblock Sparse {PCA}: optimal rates and adaptive estimation.
\newblock {\em Ann. Statist.}, 41(6):3074--3110, 2013.

\bibitem{MR3766946}
T.~T. Cai and A.~Zhang.
\newblock Rate-optimal perturbation bounds for singular subspaces with
  applications to high-dimensional statistics.
\newblock {\em Ann. Statist.}, 46(1):60--89, 2018.

\bibitem{MR650934}
J.~Dauxois, A.~Pousse, and Y.~Romain.
\newblock Asymptotic theory for the principal component analysis of a vector
  random function: some applications to statistical inference.
\newblock {\em J. Multivariate Anal.}, 12(1):136--154, 1982.

\bibitem{MR1089423}
M.~L. Eaton.
\newblock {\em Group invariance applications in statistics}, volume~1 of {\em
  NSF-CBMS Regional Conference Series in Probability and Statistics}.
\newblock Institute of Mathematical Statistics, Hayward, CA; American
  Statistical Association, Alexandria, VA, 1989.

\bibitem{MR2431769}
M.~L. Eaton.
\newblock {\em Multivariate statistics: A vector space approach}.
\newblock Institute of Mathematical Statistics, Beachwood, OH, 2007.
\newblock Reprint of the 1983 original.

\bibitem{MR3396982}
C.~Gao, Z.~Ma, Z.~Ren, and H.~H. Zhou.
\newblock Minimax estimation in sparse canonical correlation analysis.
\newblock {\em Ann. Statist.}, 43(5):2168--2197, 2015.

\bibitem{MR1354456}
R.~D. Gill and B.~Y. Levit.
\newblock Applications of the {V}an {T}rees inequality: a {B}ayesian
  {C}ram\'{e}r-{R}ao bound.
\newblock {\em Bernoulli}, 1(1-2):59--79, 1995.

\bibitem{MR0400513}
J.~H\'{a}jek.
\newblock Local asymptotic minimax and admissibility in estimation.
\newblock In {\em Proceedings of the {S}ixth {B}erkeley {S}ymposium on
  {M}athematical {S}tatistics and {P}robability, {V}ol. {I}: {T}heory of
  statistics}, pages 175--194, 1972.

\bibitem{I00}
I.~C.~F. Ipsen.
\newblock An overview of relative $\sin\theta$ theorems for invariant subspaces
  of complex matrices.
\newblock {\em J. Comput. Appl. Math.}, 123:131--153, 2000.

\bibitem{JW18}
M.~Jirak and M.~Wahl.
\newblock Relative perturbation bounds with applications to empirical
  covariance operators.
\newblock Available at https://arxiv.org/pdf/1802.02869, 2018.

\bibitem{MR4052188}
M.~Jirak and M.~Wahl.
\newblock Perturbation bounds for eigenspaces under a relative gap condition.
\newblock {\em Proc. Amer. Math. Soc.}, 148(2):479--494, 2020.

\bibitem{MR3611488}
V.~Koltchinskii and K.~Lounici.
\newblock Normal approximation and concentration of spectral projectors of
  sample covariance.
\newblock {\em Ann. Statist.}, 45(1):121--157, 2017.

\bibitem{MR4036040}
Z.~Ma and X.~Li.
\newblock Subspace perspective on canonical correlation analysis: dimension
  reduction and minimax rates.
\newblock {\em Bernoulli}, 26(1):432--470, 2020.

\bibitem{MR3980312}
A.~Naumov, V.~Spokoiny, and V.~Ulyanov.
\newblock Bootstrap confidence sets for spectral projectors of sample
  covariance.
\newblock {\em Probab. Theory Related Fields}, 174(3-4):1091--1132, 2019.

\bibitem{MR1665590}
A.~Pajor.
\newblock Metric entropy of the {G}rassmann manifold.
\newblock In {\em Convex geometric analysis ({B}erkeley, {CA}, 1996)},
  volume~34 of {\em Math. Sci. Res. Inst. Publ.}, pages 181--188. Cambridge
  Univ. Press, Cambridge, 1999.

\bibitem{MR4102689}
M.~Reiss and M.~Wahl.
\newblock Nonasymptotic upper bounds for the reconstruction error of {PCA}.
\newblock {\em Ann. Statist.}, 48(2):1098--1123, 2020.

\bibitem{RW18}
M.~Rei\ss{} and M.~Wahl.
\newblock Supplement to ``{N}on-asymptotic upper bounds for the reconstruction
  error of {PCA}''.
\newblock 2020.

\bibitem{MR2724359}
A.~B. Tsybakov.
\newblock {\em Introduction to nonparametric estimation}.
\newblock Springer, New York, 2009.
\newblock Revised and extended from the 2004 French original.

\bibitem{MR3161452}
V.~Q. Vu and J.~Lei.
\newblock Minimax sparse principal subspace estimation in high dimensions.
\newblock {\em Ann. Statist.}, 41(6):2905--2947, 2013.

\bibitem{W20}
M.~Wahl.
\newblock Lower bounds for invariant statistical models with applications to
  principal component analysis.
\newblock Annales de l'Institut Henri Poincar\'{e} Probabilit\'{e}s et
  Statistiques, to appear.

\bibitem{MR1742500}
Y.~Yang and A.~Barron.
\newblock Information-theoretic determination of minimax rates of convergence.
\newblock {\em Ann. Statist.}, 27(5):1564--1599, 1999.

\bibitem{YWS15}
Y.~Yu, T.~Wang, and R.~J. Samworth.
\newblock A useful variant of the {D}avis-{K}ahan theorem for statisticians.
\newblock {\em Biometrika}, 102:315--323, 2015.

\end{thebibliography}

\end{document}